\documentclass[11pt]{amsart}
\usepackage{amssymb,amsfonts,amsthm,amscd,stmaryrd,dsfont,esint,upgreek}
\usepackage[numbers]{natbib}
\usepackage[mathcal]{euscript}
\setlength{\bibsep}{0.15em}



\usepackage[top=1.25in,bottom=1.25in, left=1in,right=1in]{geometry}
\setlength{\parskip}{\smallskipamount}

\theoremstyle{plain}
\newtheorem{thm}{Theorem}
\newtheorem{cor}{Corollary}
\newtheorem{lem}[cor]{Lemma}
\newtheorem{prop}[cor]{Proposition}

\theoremstyle{definition}

\newtheorem{remark}[cor]{Remark}



\numberwithin{cor}{section}
\numberwithin{equation}{section}


\newcommand{\R}{\mathbb{R}}

\newcommand{\N}{\mathbb{N}}
\renewcommand{\d}{d}
\newcommand{\Rd}{\mathbb R^\d}
\newcommand{\ep}{\varepsilon}

\newcommand{\E}{\mathbb{E}}
\newcommand{\Prob}{\mathbb{P}}

\newcommand{\Lip}{{\mathcal{L}}}
\newcommand{\SL}{\mathcal{S}}
\newcommand{\SLp}{\SL^+}
\newcommand{\SLm}{\SL^-}

\DeclareMathOperator{\USC}{USC}
\DeclareMathOperator{\LSC}{LSC}

\DeclareMathOperator{\dist}{dist}

\DeclareMathOperator*{\esssup}{ess\,sup}
\DeclareMathOperator*{\essinf}{ess\,inf}

\begin{document}

\title[Stochastic homogenization of level-set convex Hamilton-Jacobi equations]{Stochastic homogenization of level-set convex Hamilton-Jacobi equations}

\author[S. N. Armstrong]{Scott N. Armstrong}
\address{Department of Mathematics\\ University of Wisconsin\\ 480 Lincoln Drive\\
Madison, Wisconsin 53706.}
\email{armstron@math.wisc.edu}
\author[P. E. Souganidis]{Panagiotis E. Souganidis}
\address{Department of Mathematics\\ The University of Chicago\\ 5734 S. University Avenue
Chicago, Illinois 60637.}
\email{souganidis@math.uchicago.edu}
\date{\today}
\keywords{stochastic homogenization, Hamilton-Jacobi equation, correctors, first-passage percolation, effective Hamiltonian}
\subjclass[2010]{35B27}

\begin{abstract}
We present a simple new proof for the stochastic homogenization of quasiconvex (level-set convex) Hamilton-Jacobi equations set in stationary ergodic environments. Our approach, which is new even in the convex case, yields more information about the qualitative behavior of the effective nonlinearity.
\end{abstract}

\maketitle

\section{Introduction} \label{I}

In this paper we give a new proof for the homogenization, in  stationary ergodic settings, of level-set convex Hamilton-Jacobi equations. Specifically, we study the behavior, as $\ep \to 0$, of viscosity solutions of the stationary equation
\begin{equation}\label{HJ}
u^\ep + H\left(Du^\ep, \frac x\ep,\omega \right) = 0 \quad \mbox{in} \ \Rd.
\end{equation}
The Hamiltonian $H=H(p,y,\omega)$ is a random process since it depends on $\omega$, an element of an underlying probability space $(\Omega, \mathcal F,\Prob)$. We postpone the precise hypotheses to the next section, but we remark here that the Hamiltonian $H=H(p,y,\omega)$ is level-set convex and coercive in $p$ and stationary-ergodic in $(y,\omega)$.

Our main result is stated as follows.

\begin{thm} \label{H}
Assume \eqref{preserve}-\eqref{sqc}. There exists a level-set convex and coercive $\overline H\in C(\Rd)$ and an event $\Omega_0 \subseteq \Omega$ of full probability such that, for each $\omega\in \Omega_0$, the unique solution $u^\ep=u^\ep(\cdot,\omega)\in C(\Rd)$ of \eqref{HJ} converges locally uniformly in $\Rd$, as $\ep \to 0$, to the unique (constant) solution $u$ of
\begin{equation}\label{HJh}
u + \overline H(Du) = 0 \quad \mbox{in} \ \Rd.
\end{equation}
\end{thm}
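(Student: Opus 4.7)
The plan is to adapt the metric-problem approach from the convex setting, using the fact that level-set convexity of $H$ means the sublevel sets $\set{p : H(p,y,\omega) \leq \mu}$ are convex in $p$. For each level $\mu \geq \min H$ I would introduce the \emph{maximal subsolution} $m_\mu(x,y,\omega)$, defined as the largest viscosity subsolution of $H(Dw,\cdot,\omega) \leq \mu$ in $\Rd$ with $w(y) = 0$. Convexity of the sublevel set combined with coercivity give $m_\mu$ a uniform Lipschitz bound in $x$ and the key subadditivity
\[ m_\mu(x,z,\omega) \leq m_\mu(x,y,\omega) + m_\mu(y,z,\omega), \]
while stationarity of $H$ in $(y,\omega)$ transfers to the corresponding stationarity of $m_\mu$.

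Second, I would apply the subadditive ergodic theorem (Akcoglu--Krengel) along rays $\{tp : t > 0\}$ to get, for each fixed $(p,\mu)$, an a.s.\ limit
\[ \overline m_\mu(p) := \lim_{t \to \infty} \frac{m_\mu(tp,0,\omega)}{t}, \]
which is deterministic, positively $1$-homogeneous, and convex in $p$. Using the uniform Lipschitz bound in $p$ and monotonicity in $\mu$, I upgrade convergence from a countable dense set of parameters to a single full-probability event $\Omega_0$ valid for all $(p,\mu)$. I then define the candidate effective Hamiltonian by
\[ \overline H(p) := \inf\set{\mu \geq \min H : \overline m_\mu(q) \geq q\cdot p \ \text{for all} \ q \in \Rd}, \]
which, by convex duality, inherits level-set convexity and coercivity from $H$ and is continuous.

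With $\overline H$ in hand, homogenization follows by the perturbed test function / half-relaxed limits method. For each $p$ and each $\mu > \overline H(p)$, the convergence of $m_\mu(tp,0,\omega)/t$ to $\overline m_\mu(p) \geq p \cdot p$ provides, after subtracting the linear part, a sublinear approximate corrector $w^\ep$ satisfying $H(p + Dw^\ep, \cdot/\ep, \omega) \leq \mu$; a complementary construction furnishes the opposite inequality at $\mu < \overline H(p)$. The standard perturbed test function argument then shows that the half-relaxed upper and lower limits of $u^\ep$ are respectively a sub- and super-solution of \eqref{HJh}, and the comparison principle for the limiting equation pins down the common limit $u$.

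The main obstacle lies in producing the super-corrector side of the cell problem. The maximal subsolution construction naturally yields only subsolutions, and for level-set convex $H$ one no longer has the Legendre transform / Lax--Oleinik formula which, in the genuinely convex case, produces super-correctors directly. The argument must therefore be made indirectly, by ruling out the existence of sublinear subsolutions at any level $\mu$ strictly below $\overline H(p)$ and converting this non-existence into the needed supersolution inequality. Carrying this out requires a quantitative interplay between the asymptotic metric $\overline m_\mu$ and the cell problem that truly uses only level-set convexity, and this is where the weaker-than-convex structure forces something genuinely new beyond the convex case.
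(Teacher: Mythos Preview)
Your setup is essentially that of the paper: introduce the maximal subsolutions $m_\mu$, apply the subadditive ergodic theorem to get $\overline m_\mu$, and recover $\overline H$ by duality. Where you diverge from the paper is precisely at the point you flag as the main obstacle, and there your proposal has a genuine gap.

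You propose to handle the supercorrector side ``indirectly, by ruling out the existence of sublinear subsolutions at any level $\mu$ strictly below $\overline H(p)$ and converting this non-existence into the needed supersolution inequality.'' But non-existence of a subsolution at level $\mu$ is essentially the \emph{definition} of $\mu < \overline H(p)$; it does not by itself produce a function you can compare against, and the conversion step you allude to is never made concrete. In the convex case this gap is filled by Lax--Oleinik / Legendre duality, as you note; here you have no replacement.

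The paper's resolution is direct, not indirect, and rests on a point you overlook: $m_\mu(\cdot,x,\omega)$ is not merely the maximal \emph{subsolution}, it is a viscosity \emph{solution} of $H(Dm_\mu,y,\omega)=\mu$ in $\Rd\setminus\{x\}$, hence in particular a supersolution there. So one already has a supersolution on any ball that avoids the vertex $x$. The remaining issue is to control $m_\mu(\cdot,x,\omega)-p\cdot y$ from below on that ball, and this is where the homogenized metric is used: with $\mu=\overline H(p)$, choose a unit vector $x_0$ so that $p\in\partial \overline m_\mu(x_0)$ (equivalently $\overline m_\mu(x_0)=p\cdot x_0$ and $\overline m_\mu(y)\ge p\cdot y$ for all $y$), and place the vertex at $-x_0/\delta$. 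Then $y\mapsto m_\mu(y,-x_0/\delta,\omega)-p\cdot y$ is a supersolution on $B_{r/\delta}$ and, by the ergodic limit, is bounded below there up to $o(1/\delta)$. Comparing this against $v^\delta$ (the paper works with the auxiliary problem $\delta v^\delta+H(p+Dv^\delta,y,\omega)=0$ rather than half-relaxed limits of $u^\ep$) yields $\overline H(p)\le h_*(p)$.

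For the other inequality the paper does \emph{not} use $m_\mu$ but the companion quantity $n_\mu$ built from the reversed Hamiltonian $G(p,y,\omega)=H(-p,y,\omega)$, related by $n_\mu(y,x)=m_\mu(x,y)$. The function $y\mapsto -n_\mu(y,0,\omega)$ is a global subsolution at level $\mu$, and for $\mu>\overline H(p)$ one has $-p\in\partial\overline n_\mu(0)$, which controls $-n_\mu(y,0,\omega)-p\cdot y$ from above. Your outline never introduces this second metric, and without it (or an equivalent device) the subcorrector you build from $m_\mu$ alone will not have the right sign of the boundary control needed to close the comparison.
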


The homogenization of~\eqref{HJ} for convex Hamiltonians $H$ was first proved by Souganidis~\cite{S} and Rezakhanlous and Tarver~\cite{RT}. Theorem~\ref{H} was first proved by Lions and Souganidis~\cite{LS4} several years ago. Their unpublished proof, which requires only \eqref{qc} and not \eqref{Lambda}-\eqref{sqc}, is entirely different from ours and consisted of approximating level-set convex Hamiltonians by functions of the form $\varphi (G)$, with $\varphi$ monotone and $G$ convex. Finally, Theorem~\ref{H} in one space dimension was proved by Siconolfi and Davini~\cite{DS0} using different arguments than ours.

We state the result for the time-independent problem for ease of exposition. Indeed, it is well-known (see~\cite{ASo} for example) that Theorem~\ref{H} is sufficient to obtain the homogenization of the time-dependent problem
\begin{equation}\label{HJt}
\left\{ \begin{aligned} 
& u_t^\ep + H\left(Du^\ep, \frac x\ep, \omega\right) = 0 & \mbox{in} & \ \Rd \times (0,\infty), \\
& u^\ep(\cdot,0,\omega) = u_0 & \mbox{on} & \ \Rd.
\end{aligned} \right.
\end{equation}

The purpose of this paper is to present a complete proof of Theorem~\ref{H}, which does not rely, as in previous work, either on explicit formulae for $u^\ep$ or on the existence of appropriate subcorrectors. The argument, which, as already mentioned, is new even for convex $H$, consists of homogenizing the equation ``one level-set of $\overline H$ at a time." Thus the generality of level-set convexity is quite natural. 

More precisely, we consider the eikonal-type equations (also called the \emph{metric problems})
\begin{equation}\label{eik-pre}
H(Du,y,\omega) = \mu \quad \mbox{in} \ \Rd \setminus \{ x \}, \quad u(x) = 0,
\end{equation}
and
\begin{equation}\label{eik-erp}
H(-Dv,y,\omega) = \mu \quad \mbox{in} \ \Rd \setminus \{ x \}, \quad v(x) = 0.
\end{equation}
It turns out that, for $\mu\in \Rd$ larger than some critical value, \eqref{eik-pre} and \eqref{eik-erp} have unique maximal solutions $m_\mu(\cdot,x,\omega)$ and $n_\mu(\cdot,x,\omega)$, respectively, satisfying $m_\mu(y,x,\omega)=m_\mu(x,y,\omega)$, which are necessarily stationary in the appropriate way and subadditive. The subadditive ergodic theorem yields almost sure limits, along rays from the origin, for the normalized $m_\mu$ and $n_\mu$, i.e., the existence of deterministic functions $\overline m_\mu, \ \overline n_\mu:\Rd \to \R$ such that, as $t\to \infty$, almost surely in $\omega$ and locally uniformly in $y\in\Rd$,
\begin{equation*}\label{}
t^{-1} m_\mu(ty,0,\omega) \rightarrow \overline m_\mu(y) \quad \mbox{and} \quad t^{-1} n_\mu(ty,0,\omega) \rightarrow \overline n_\mu(y).
\end{equation*}
It turns out, as we show here, that the above limits, which can be rephrased as a homogenization result for the metric problems, suffice to prove Theorem~\ref{H}. 

The role of the maximal solutions $m_\mu$ in the general theory of viscosity solutions of convex Hamilton-Jacobi equations has been known for some time (see, e.g., Lions~\cite{Li}). The first proofs of the stochastic homogenization of~\eqref{HJ} was based on the behavior of the time-dependent versions of \eqref{eik-pre}. The metric problem \eqref{eik-pre} for convex $H$ in the stationary ergodic setting was analyzed in detail in~\cite{DS0,DS1,DS2} without however showing that its weighted asymptotic implies the homogenization of \eqref{HJ}. The authors~\cite{ASo} introduced the metric problem for general degenerate elliptic quasilinear viscous Hamilton-Jacobi equations and showed that its asymptotic behavior together with the existence of appropriate subcorrectors yields homogenization. The metric problem for certain uniformly elliptic viscous Hamilton-Jacobi equations was previously considered by Sznitman in his study of the large deviations of a diffusion in random environment (see his book~\cite{Szb}). 

Here we show that by considering both \eqref{eik-pre} and \eqref{eik-erp} it is possible to obtain homogenization of level-set convex equations without first having the existence of subcorrectors. The idea is to exploit the fact that, for fixed $p$ and $\mu=\overline H(p)$ and $|x|$ large, $m_\mu(y,x,\omega) -p\cdot y$ is an approximate supercorrector (as a function of $y$) and $-n_\mu(x,y,\omega)- p\cdot y$ is an approximate subcorrector in the ball $B(0, |x|/4)$. The argument is related to the approach the authors introduced in~\cite{ASo} to homogenize viscous Hamilton-Jacobi equations in unbounded environments (see also~\cite{ASo2}). However, unlike the latter, it does not rely on the method proposed by Lions and Souganidis~\cite{LS3} based on the construction of a subcorrector with stationary, mean-zero gradient. In fact, we obtain the existence of a subcorrector (see Propostion~\ref{usual}) but this actually requires use of the homogenization theorem. In addition to the easy generalization to level-set convex $H$, the approach we put forward here is more constructive.

We also present some new results on the qualitative behavior of the effective Hamiltonian. In particular, in the case that $H$ is strictly convex in $p$, we generalize and give a new and simpler proof of a result of Evans and Gomes~\cite{EG} on the strict convexity of $\overline H$ in directions transverse to its level sets. This justifies the intuition that, if $H$ is strictly convex in $p$, then $\overline H$ can only fail to be strictly convex by having flat spots or flat edges in its level sets. 

Our methods extend to Hamiltonians with macroscopic spacial dependence, i.e., to
\begin{equation}\label{HJsd}
u^\ep + H\left( Du^\ep, x, \frac x\ep, \omega\right) = 0 \quad \mbox{in} \ \Rd
\end{equation}
and its time-dependent analogues. Let us mention, however, that our arguments seem to us to be strongly first-order in that they do not obviously apply to \emph{viscous} Hamilton-Jacobi equations like
\begin{equation*}\label{}
u^\ep - \Delta u^\ep + H\left( Du^\ep, x, \frac x\ep, \omega\right) = 0 \quad \mbox{in} \ \Rd.
\end{equation*}

The study of the homogenization of Hamilton-Jacobi equations was initiated in the unpublished work of Lions, Papanicolaou and Varadhan~\cite{LPV}, who obtained results in the periodic case. This was simplified by Evans~\cite{E} and extended (via a new proof) to the almost periodic case by Ishii~\cite{I}. As mentioned above, results for stochastic homogenization of first-order Hamilton-Jacobi equations were first obtained by Souganidis \cite{S} and  Rezakhanlou and Tarver \cite{RT} and, for viscous Hamilton-Jacobi equations, by Lions and Souganidis \cite{LS2} and Kosygina, Rezakhanlou, and Varadhan \cite{KRV}. We also mention the work of Kosygina and Varadhan \cite{KV} and Schwab \cite{Sch} who obtained homogenization results for these equations in spatio-temporal media.

In the next section we introduce the assumptions and give some preliminary results needed for the main argument. In Sections~\ref{EIK} and~\ref{MP} we review the eikonal equation in non-random and random environments, respectively. We give the proof of the main result in Section~\ref{HP}, and conclude with some qualititative properties of the effective equation in Section~\ref{C}.

\section{Preliminaries} \label{P}

We introduce the assumptions and present some preliminary results to be used later in the paper.

\subsection*{The hypotheses}
We consider a probability space $(\Omega, \mathcal F, \mathds P)$ endowed with an ergodic group $(\tau_y)_{y\in \Rd}$ of $\mathcal F$-measurable, measure-preserving transformations $\tau_y:\Omega\to \Omega$. That is, we assume that, for every $x,y\in\Rd$ and $A\in \mathcal F$,
\begin{equation}\label{preserve}
\tau_{x+y} = \tau_{x} \circ \tau_y \quad \mbox{and} \quad \Prob[\tau_y(A)] = \Prob[A],
\end{equation}
and
\begin{equation}\label{ergo}
\mbox{if} \ \ \tau_z(A) = A \ \ \mbox{for every} \ z\in \Rd, \ \ \mbox{then either} \ \ \Prob[A]=0 \ \mbox{or} \ \Prob[A]=1.
\end{equation}
The Hamiltonian $H:\Rd\times\Rd\times \Omega\to \R$ is assumed to be measurable with respect to the $\sigma$-algebra generated by $\mathcal B \times \mathcal B \times \mathcal F$, where $\mathcal B$ denotes the Borel $\sigma$-algebra of $\Rd$. We write $H = H(p,y,\omega)$, and we assume that $H$ is \emph{stationary} in $(y,\omega)$ with respect to the group $( \tau_y )_{y\in\Rd}$, that is, for every 
$p,y,z\in\Rd$ and $\omega \in \Omega$,
\begin{equation} \label{stationary}
H(p,y,\tau_z \omega) = H(p,y+z,\omega).
\end{equation}
As far as regularity in $(p,y)$, we require that, for every $R> 0$, the family of functions
\begin{equation}\label{reg}
\{ H(\cdot,\cdot,\omega) : \omega\in\Omega \} \quad \mbox{is bounded and equicontinuous on} \ B_R\times \Rd.
\end{equation}
We also assume that $H$ is \emph{coercive} in $p$ uniformly in $(y,\omega)$ in the sense that
\begin{equation}\label{coer}
\lim_{|p|\to\infty} \essinf_{(y,\omega) \in \Rd \times \Omega} H(p,y,\omega) = +\infty,
\end{equation}
and \emph{level-set convex} in $p$, i.e., for every $(y,\omega) \in \Rd \times \Omega$ and $p,q \in \Rd$,
\begin{equation}\label{qc}
H\left(\tfrac12 (p + q),y,\omega\right) \leq \max\left\{ H(p,y,\omega), H(q,y,\omega)\right\}.
\end{equation}
Of course, \eqref{qc} is equivalent to the statement that, for all $0 \leq \lambda \leq 1$, $p,q,y \in \Rd$ and $\omega \in \Omega$,
\begin{equation*}\label{}
H\left( \lambda p + (1-\lambda)q,y,\omega \right) \leq  \max\left\{ H(p,y,\omega), H(q,y,\omega)\right\}.
\end{equation*}
Actually, we need slightly more than \eqref{qc}, which is, roughly speaking, the requirement that, except for the minimal level set, the level sets of $H$ in $p$ have empty interior uniformly in $(y,\omega)$. We quantify this precisely by assuming there exists $\Lambda\in C(\R\times\R)$ satisfying
\begin{equation}\label{Lambda}
\Lambda \ \ \mbox{is nondecreasing in both of its arguments and, for all}   \  \mu \neq \nu, \ \Lambda(\mu,\nu) < \max\{ \mu,\nu\}
\end{equation}
and such that 
%
for all $p,q,y \in\Rd$ and $\omega\in \Omega$,
\begin{equation}\label{sqc}
H\left(\tfrac12 (p + q),y,\omega\right) \leq \Lambda\big( H(p,y,\omega), H(q,y,\omega)\big).
\end{equation}
Notice that $H$ is convex if and only if \eqref{sqc} holds with $\Lambda (\mu,\nu) = \frac12 (\mu + \nu)$. We remark $H(p,y,\omega):= \Phi(G(p,y,\omega))$ satisfies \eqref{qc} (respectively, \eqref{Lambda}-\eqref{sqc}, with $\Phi(\Lambda)$ in place of $\Lambda$) if $G$ is convex if $p$ and $\Phi:\R \to \R$ is increasing (respectively, strictly increasing). Another explicit example is $H(p):= |p|^{\gamma}$ with $\gamma > 0$, which satisfies \eqref{sqc} for $\Lambda(\mu,\nu) = \left( \frac12 (\mu^{1/\gamma} + \nu^{1/\gamma})\right)^\gamma$. 

It is easy to show by repeatedly applying \eqref{sqc} that, for each $0<\lambda \leq \frac12$, there exists a function $\Lambda_{\lambda}\in C(\R\times\R)$ satisfying \eqref{Lambda} such that, for every $p,q,y\in \Rd$ and $\omega\in \Omega$,
\begin{equation}\label{sqclambda}
H\left( \lambda p + (1-\lambda)q,y,\omega \right) \leq \Lambda_{\lambda} \big( H(p,y,\omega), H(q,y,\omega)\big)
\end{equation}
and the $\Lambda_\lambda$'s satisfy $\Lambda_{\lambda_1} \geq \Lambda_{\lambda_2}$ for each $0<\lambda_1 \leq \lambda_2 \leq \frac12$.

We emphasize that, throughout the paper, the hypotheses above, including \eqref{preserve}-\eqref{sqc}, are in force. In certain settings in which the probability space plays no important role (e.g., in Lemma~\ref{convtrick}, Remark~\ref{yeswecan} and the entirety of Section~\ref{EIK}), we drop the dependence on $\omega$, keeping the obvious analogues of \eqref{reg}, \eqref{coer} and \eqref{sqc} in place.

\subsection*{Some preliminary results}
We denote by $\USC(U)$ and $\LSC(U)$, respectively, the sets of real-valued upper and lower semicontinuous functions defined on $U$. The next lemma is standard, but because it is usually stated only for Hamiltonians convex in $p$, we include a proof for the convenience of the reader. Note that the statement is true for a level-set convex Hamiltonian only because the right side of \eqref{convteq} is constant. 

\begin{lem} \label{convtrick}
Fix $\nu\in \R$ and $U\subseteq \Rd$ open. Then $v\in \USC(U)$ is a viscosity solution of 
\begin{equation}\label{convteq}
H(Dv,y) \leq \nu \quad \mbox{in} \ U
\end{equation}
if and only if $v$ is locally Lipschitz in $U$ and satisfies \eqref{convteq} almost everywhere in $U$. 
\end{lem}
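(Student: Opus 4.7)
The plan is to prove both implications by standard arguments, with the main subtlety being a Jensen-type inequality adapted to level-set convex Hamiltonians in the reverse direction.

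For the forward direction, I would first combine coercivity \eqref{coer} with equicontinuity \eqref{reg} to show that, if $\phi$ is any smooth test function touching $v$ from above at $y_0 \in U$, then the viscosity inequality $H(D\phi(y_0),y_0) \leq \nu$ forces $|D\phi(y_0)| \leq L$ for some constant $L = L(\nu)$ (equicontinuity upgrades the essential infimum in \eqref{coer} to a genuine infimum). A standard maximum principle argument applied to $y \mapsto v(y) - (L+\delta)|y-z|$ on a compact neighborhood of any $z \in U$ then yields that $v$ is locally Lipschitz with constant $L$. Rademacher's theorem supplies differentiability almost everywhere, and at any such point $y_0$ one has $Dv(y_0) \in D^+v(y_0)$, so that $H(Dv(y_0),y_0) \leq \nu$ by the viscosity subsolution property.

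For the reverse direction, fix a smooth $\phi$ that touches $v$ strictly from above at $y_0$ and mollify via $v^\epsilon := v \ast \rho_\epsilon$. Local uniform convergence $v^\epsilon \to v$ guarantees that $v^\epsilon - \phi$ attains a local maximum at some $y_\epsilon \to y_0$, so that $D\phi(y_\epsilon) = Dv^\epsilon(y_\epsilon) = \int Dv(y_\epsilon - z)\rho_\epsilon(z)\,dz$. The equicontinuity \eqref{reg} combined with the pointwise a.e. bound $H(Dv(w),w) \leq \nu$ yields $H(Dv(y_\epsilon - z), y_\epsilon) \leq \nu + \eta(\epsilon)$ for a.e. $z \in \supp \rho_\epsilon$, where $\eta$ is a modulus of continuity for $H$ in its second argument. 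The crucial observation is that the sublevel set $\{p \in \Rd : H(p,y_\epsilon) \leq \nu + \eta(\epsilon)\}$ is convex (by \eqref{qc}) and bounded (by coercivity), so it contains the average $D\phi(y_\epsilon)$; consequently $H(D\phi(y_\epsilon), y_\epsilon) \leq \nu + \eta(\epsilon)$. Sending $\epsilon \to 0$ and invoking continuity of $H$ completes the argument.

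The main obstacle is the Jensen-type step in the reverse direction. For convex $H$ one could apply Jensen's inequality directly to $H$ to control the average of gradients, but for merely level-set convex $H$ this is unavailable; instead one must work with the convexity of sublevel sets. This substitute succeeds precisely because the right-hand side of \eqref{convteq} is a constant, as the remark preceding the lemma highlights; if the bound varied with $y$, controlling the essential supremum of the level parameter over $\supp \rho_\epsilon$ would be delicate.
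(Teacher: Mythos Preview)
Your proof is correct and follows essentially the same route as the paper. Both directions match: the forward direction uses coercivity for the Lipschitz bound and Rademacher for a.e.\ differentiability, and the reverse direction mollifies and exploits the convexity of the sublevel sets of $H(\cdot,y)$ from \eqref{qc}; the paper phrases the final step as stability of viscosity subsolutions under the locally uniform convergence $v^\delta \to v$, which unwinds to exactly the test-function computation you carry out.
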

\begin{proof}
A viscosity solution of \eqref{convteq} is Lipschitz due to the coercivity of $H$, and therefore is differentiable almost everywhere in $U$ and satisfies \eqref{convteq} at any point of differentiability. This is classical and we refer to~\cite{Ba} for the details.

Conversely, suppose that $v$ is locally Lipschitz and satisfies \eqref{convteq} almost everywhere in $U$. Fix $\theta > 0$ and consider, for $\delta > 0$, the standard mollification $v^\delta (y):= \delta^{-\d} \int_{\Rd} v(y-z) \eta(z/\delta) \, dy$ of $v$ which is defined in $U_\delta:= \{ y\in U\, : \, \dist(y,\partial U) > \delta \}$. Then $v^\delta$ is smooth and, according to \eqref{reg} and \eqref{qc}, for $\delta>0$ sufficiently small, 
\begin{equation}\label{approx}
H(Dv^\delta,y) \leq \nu + \theta \quad \mbox{in} \ U_{2\delta}. 
\end{equation}
Since $v^\delta$ converges, as $\delta \to 0$, to $v$ locally uniformly in $U$, we may pass to limits $\delta \to 0$ in \eqref{approx} and then send $\theta \to 0$ to obtain \eqref{convteq} in the viscosity sense.
\end{proof}

It is useful to notice that \eqref{reg} and \eqref{coer} imply that $H$ is uniformly continuous ``from below" in the following sense (here we drop dependence on $\omega$): for every $\mu \in \R$ and $\alpha > 0$, there exists $\theta >0$, depending only on \eqref{reg}, \eqref{coer} and an upper bound for $\mu$, such that, for all $(p,y) \in \Rd \times \Rd$ for which $H(p,y)\geq \mu$,
\begin{equation}\label{ywc}
\inf_{|q| \leq \theta} H(p+q,y) \geq \mu - \alpha.
\end{equation}
If not, then there exists $\alpha_0>0$ and $p_n,q_n,y_n\in \Rd\times\Rd\times\Rd$ such that $q_n\to 0$ as $n\to \infty$, $H(p_n,y_n) \geq \mu$ and $H(p_n+q_n,y_n) \leq \mu-\alpha_0$.
According to \eqref{coer}, there exist $R>0$ such that $|p_n| \leq R$. Using~\eqref{reg} and the fact that $q_n\to 0$, we can choose $n$ large enough such that, for all $(p,y) \in B_R\times\Rd$,
\begin{equation*}\label{}
H(p+q_n,y) \geq H(p,y) - \frac12 \alpha_0.
\end{equation*}
Hence for large enough $n$ we obtain
\begin{equation*}\label{}
\mu - \alpha_0 \geq H(p_n+q_n,y_n) \geq H(p_n,y_n) - \frac12 \alpha_0 \geq \mu - \frac12 \alpha_0,
\end{equation*}
which is the desired contradiction. 

An immediate consequence of \eqref{ywc} is the following lemma, which states that a small, smooth perturbation of a supersolution is still nearly a supersolution.

\begin{lem}\label{yeswecan}
Fix $\nu\in \R$, $U\subseteq \Rd$ open and $\alpha> 0$. Suppose $v\in \LSC(U)$ is a viscosity solution of 
\begin{equation*}
H(Dv,y) \geq \mu \quad \mbox{in} \ U
\end{equation*}
and $\varphi$ is a smooth function with $\sup_{U} |D\varphi| \leq \theta$, where $\theta$ is as in~\eqref{ywc}.  Then $\widetilde v:= v + \varphi$ satisfies
\begin{equation*}\label{}
H(D\widetilde v,y) \geq \mu - \alpha \quad \mbox{in} \ U.
\end{equation*}
\end{lem}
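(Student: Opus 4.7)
The plan is to verify the supersolution inequality for $\widetilde v$ by directly testing with smooth functions and then invoke the uniform continuity property \eqref{ywc} just established. The whole argument rests on the fact that adding a smooth function commutes with the viscosity test, so the perturbation is ``transparent'' to the test machinery.

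Concretely, I would fix a point $y_0 \in U$ and a smooth test function $\psi$ that touches $\widetilde v = v + \varphi$ from below at $y_0$. Since $\varphi$ is itself smooth, the function $\psi - \varphi$ is a smooth test function that touches $v$ from below at $y_0$. The supersolution property of $v$ then yields
\begin{equation*}
H\bigl(D\psi(y_0) - D\varphi(y_0),\, y_0\bigr) \geq \mu.
\end{equation*}
Setting $p := D\psi(y_0) - D\varphi(y_0)$ and $q := D\varphi(y_0)$, the hypothesis $\sup_U|D\varphi| \leq \theta$ forces $|q|\leq \theta$, and \eqref{ywc} applied at the pair $(p,y_0)$ gives
\begin{equation*}
H\bigl(p+q,\, y_0\bigr) \geq \mu - \alpha.
\end{equation*}
But $p + q = D\psi(y_0)$, which is precisely the supersolution inequality for $\widetilde v$ at $y_0$ with right-hand side $\mu - \alpha$. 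Since $y_0$ and $\psi$ were arbitrary, the conclusion follows.

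There is genuinely no obstacle here beyond being careful that the constant $\theta$ produced in \eqref{ywc} is the same one appearing in the hypothesis on $\varphi$, which is how the statement of the lemma is set up. The content of the lemma is thus almost entirely packaged into the preceding quantitative continuity estimate \eqref{ywc}, and the proof is just a one-line application of it inside the viscosity test.
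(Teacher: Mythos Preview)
Your proof is correct and is exactly the natural unpacking of what the paper leaves implicit: the paper simply states that the lemma is ``an immediate consequence of \eqref{ywc}'' without writing out the viscosity test, and your argument supplies precisely those details.
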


A measurable function $\phi:\Omega\times \Rd\to \R$ is said to have \emph{stationary and mean-zero increments} if, for all $x,y,z\in \Rd$ and $\omega\in \Omega$,
\begin{equation*}\label{}
\phi(x,\tau_z\omega) - \phi(y,\tau_z\omega) = \phi(x+z,\omega) - \phi(y+z,\omega) \quad \mbox{and} \quad \E[ \phi(x,\cdot) ] = \E[\phi(0,\cdot)],
\end{equation*}
respectively. The following lemma is due to Kozlov~\cite{K} (see also~\cite{ASo} for a proof).

\begin{lem} \label{kozlov}
Suppose that $w:\Rd\times\Omega\to \R$ has stationary, mean-zero increments and is Lipschitz in its first variable, a.s. in~$\omega$. Then
\begin{equation}  \label{slinfty}
\lim_{|y|\to\infty} |y|^{-1}w(y,\omega) = 0 \quad \mbox{a.s. in} \ \omega.
\end{equation}
\end{lem}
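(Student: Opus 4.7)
The plan is to reduce to a cocycle vanishing at the origin, establish sublinearity along rays via Birkhoff's ergodic theorem together with the ambient $\Rd$-ergodicity, and then upgrade to uniform sublinearity using the Lipschitz bound. I set $\phi(y,\omega) := w(y,\omega) - w(0,\omega)$; since $|y|^{-1}|w(0,\omega)|\to 0$ trivially, \eqref{slinfty} reduces to showing $|y|^{-1}\phi(y,\omega) \to 0$ a.s.\ as $|y|\to\infty$. The stationarity of increments rewrites as the cocycle identity $\phi(y+z,\omega) = \phi(z,\omega)+\phi(y,\tau_z\omega)$, and the mean-zero hypothesis gives $\E[\phi(y,\cdot)]=0$ for every $y\in\Rd$. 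The Lipschitz constant of $w(\cdot,\omega)$ is $\tau$-invariant by this same stationarity, so \eqref{ergo} forces it to equal a deterministic $L<\infty$ almost surely.

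Next, fixing $q\in\Rd\setminus\{0\}$, I apply Birkhoff's ergodic theorem to the bounded function $\phi(q,\cdot)\in L^1(\Omega)$ under the one-parameter subgroup $\{\tau_{tq}\}_{t\in\R}$:
\[
\frac{1}{T}\int_0^T \phi(q,\tau_{tq}\omega)\,dt \;\longrightarrow\; \ell_q(\omega) := \E\!\left[\phi(q,\cdot)\,\big|\, \mathcal I_q\right]\quad\text{a.s.},
\]
where $\mathcal I_q$ denotes the $\{\tau_{tq}\}$-invariant $\sigma$-algebra. The cocycle telescopes the integrand, so the time average equals $T^{-1}\int_T^{T+1}\phi(sq,\omega)\,ds - T^{-1}\int_0^1\phi(sq,\omega)\,ds$, which by the Lipschitz bound agrees with $T^{-1}\phi(Tq,\omega)$ up to $o(1)$. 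Hence $T^{-1}\phi(Tq,\omega)\to \ell_q(\omega)$ almost surely. Combining the cocycle once more with the Lipschitz estimate $|\phi(Tq+z,\omega)-\phi(Tq,\omega)|\leq L|z|$ yields $\ell_q(\tau_z\omega) = \ell_q(\omega)$ for every $z\in\Rd$; \eqref{ergo} then forces $\ell_q$ to be a.s.\ constant, and since $\E[\ell_q] = \E[\phi(q,\cdot)] = 0$, we conclude $\ell_q \equiv 0$.

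Finally, intersecting the resulting null sets over a countable dense $Q\subset\Rd$ produces a full-measure event $\Omega_0$ on which $T^{-1}\phi(Tq,\omega)\to 0$ for every $q\in Q$. Given $\ep>0$, I select finitely many $q_1,\dots,q_N\in Q$ whose directions are $\ep$-dense in the unit sphere; for $|y|$ large I pick $q_i$ with $||y|^{-1}y - q_i|<\ep$, and the Lipschitz bound then forces
\[
\limsup_{|y|\to\infty}\,|y|^{-1}|\phi(y,\omega)| \;\leq\; L\ep,
\]
after which sending $\ep\to 0$ closes the argument. The main obstacle is the vanishing of $\ell_q$ in the previous step: Birkhoff along a single ray yields only an $\mathcal I_q$-measurable limit, and the mean-zero property alone does not force that limit to vanish. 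The trick is to upgrade $\ell_q$ to a fully $\Rd$-invariant random variable via the cocycle identity together with the Lipschitz bound, at which point the ambient ergodicity \eqref{ergo} collapses it to the constant $0$.
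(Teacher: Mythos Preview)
The paper does not actually prove Lemma~\ref{kozlov}; it attributes the result to Kozlov and refers the reader to \cite{ASo} for a proof. Your argument is the standard one and is correct: reduce to the cocycle $\phi$, apply the ergodic theorem along each ray to obtain a limit $\ell_q$, use the cocycle identity together with the Lipschitz bound to upgrade $\ell_q$ from $\{\tau_{tq}\}$-invariant to fully $\Rd$-invariant (whence $\ell_q\equiv \E[\phi(q,\cdot)]=0$ by \eqref{ergo}), and then pass from a countable dense set of directions to all of $\Rd$ via the Lipschitz estimate.

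Two small remarks. First, you could sidestep any joint-measurability issues with the continuous flow by using the discrete-time Birkhoff theorem for the single transformation $\tau_q$: the cocycle telescopes the sum $\frac{1}{N}\sum_{n=0}^{N-1}\phi(q,\tau_{nq}\omega)$ directly to $\frac{1}{N}\phi(Nq,\omega)$, and the Lipschitz bound handles non-integer $T$. Second, in the final density step it is cleanest to take the $q_i$ themselves to be $\ep$-dense in the unit sphere (rather than only their directions), so that for $y\neq 0$ one compares $\phi(y,\omega)$ with $\phi(|y|q_i,\omega)$ and obtains $|y|^{-1}|\phi(y,\omega)-\phi(|y|q_i,\omega)|\leq L\ep$ immediately.
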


We rely on a version of the subadditive ergodic theorem given in Akcoglu and Krengel~\cite{AK}. For the reader's convenience, we recall the precise statement, which requires some additional notation.

Denote by $\mathcal{I}$ the class of subsets of $[0,\infty)$ consisting of finite unions of intervals of the form~$[a,b)$ and let $(\sigma_t)_{t\geq 0}$ be a semigroup of measure-preserving transformations on $\Omega$. A \emph{continuous subadditive process} on $(\Omega, \mathcal F, \Prob)$ with respect to $(\sigma_t)_{t\geq 0}$ is a map $Q: \mathcal I \rightarrow L^1(\Omega,\Prob)$ such that
\begin{enumerate}
\item[(i)] $Q(I)(\sigma_t\omega) = Q( t+I)(\omega)$ for each $t>0$, $I \in \mathcal I$ and a.s. in $\omega$,
\item[(ii)] there exists a constant $C> 0$ such that for each $I\in \mathcal I$, $\E \big| Q(I) \big| \leq C |I|$ and
\item[(iii)] if $I_1,\ldots I_k \in \mathcal I$ are disjoint, then $Q(\cup_{j=1}^{k} I_j) \leq \sum_{j=1}^k Q(I_j)$.
\end{enumerate}

The statement of the subadditive ergodic theorem we need is the following. 

\begin{prop}[{\cite{AK}}] \label{SAET}
Suppose that $Q$ is a continuous subadditive process with respect to $(\sigma_t)_{t\geq 0}$. Then there exists a random variable $a$, which is invariant under $(\sigma_t)_{t\geq 0}$, such that
\begin{equation*} 
t^{-1} Q\big([0,t)\big)(\omega) \rightarrow a(\omega) \quad \mbox{a.s. in} \ \omega.
\end{equation*}
In particular, if $( \sigma_t )_{t>0}$ is ergodic, then $a$ is constant a.s. in $\omega$.
\end{prop}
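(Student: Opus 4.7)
The plan is to adapt the classical Kingman/Steele proof of the discrete subadditive ergodic theorem to the continuous-parameter setting and then extract the two extra conclusions (invariance of the limit and constancy under ergodicity).

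First I would reduce to discrete time by setting $a_n(\omega) := Q\big([0,n)\big)(\omega)$ for $n \in \N$. Combining (i) and (iii) yields the stationary subadditive relation $a_{m+n}(\omega) \le a_m(\omega) + a_n(\sigma_m \omega)$ a.s., and (ii) gives $\E|a_n| \le Cn$, so each $a_n \in L^1$. By Fekete's lemma applied to the subadditive deterministic sequence $n \mapsto \E[a_n]$, the value $\gamma := \lim_{n\to\infty} n^{-1}\E[a_n] = \inf_{n \ge 1} n^{-1}\E[a_n]$ exists in $[-\infty, C]$. The random variables $\bar a(\omega) := \limsup_n n^{-1} a_n(\omega)$ and $\underline a(\omega) := \liminf_n n^{-1} a_n(\omega)$ are $\sigma_1$-invariant by the stationary subadditivity together with the fact that $n^{-1} a_k \to 0$ a.s.\ for each fixed $k$.

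The core of the argument, and its main obstacle, is the sandwich $\E[\bar a] \le \gamma \le \E[\underline a]$, which forces $\bar a = \underline a$ a.s.\ and identifies the common value with the desired invariant limit $a$. The upper bound follows from reverse Fatou applied to truncations, together with stationary subadditivity. The lower bound is the hard direction and the place where a genuine ergodic-theoretic input is needed: for fixed $\delta > 0$ I would introduce the first-entry time $N_\delta(\omega) := \inf\{n \ge 1 : n^{-1} a_n(\omega) \le \underline a(\omega) + \delta\}$, truncate it at some large $M$, and use the truncated $N_\delta$ to decompose $[0, k)$ into consecutive stationary blocks. Summing the subadditive estimate across the blocks and taking expectations produces a bound of the form $k^{-1}\E[a_k] \le \E[\underline a] + \delta + O(M/k) + O\big(\E[|a_1| \mathbf{1}_{\{N_\delta > M\}}]\big)$; sending $k \to \infty$, then $M \to \infty$, then $\delta \to 0$ closes the estimate.

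With discrete-time a.s.\ convergence established, the passage to continuous time uses (iii) to bound $|Q([0,t)) - Q([0,\lfloor t \rfloor))|$ by $|Q([\lfloor t\rfloor, t))|$, whose $L^1$ norm is $O(1)$ by (ii) and hence vanishes once normalized by $t$. The $\sigma_t$-invariance of the limit $a$ for every $t \ge 0$ then follows from (i) together with a.s.\ convergence along real parameters. Finally, in the ergodic case every $(\sigma_t)$-invariant random variable is a.s.\ constant, so $a \equiv \E[a]$, completing the proof.
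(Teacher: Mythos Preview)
The paper does not prove Proposition~\ref{SAET}; it is stated as a citation from Akcoglu and Krengel~\cite{AK} with no argument given. So there is nothing in the paper to compare your proof against. That said, a few comments on the sketch itself.

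Your discrete-time portion is the standard Kingman/Steele argument and is fine as an outline. The passage to continuous time, however, has a genuine gap. You write that (iii) bounds $\left|Q\big([0,t)\big) - Q\big([0,\lfloor t\rfloor)\big)\right|$ by $\left|Q\big([\lfloor t\rfloor, t)\big)\right|$, but subadditivity gives only the one-sided inequality $Q\big([0,t)\big) \le Q\big([0,\lfloor t\rfloor)\big) + Q\big([\lfloor t\rfloor,t)\big)$; for the reverse direction you need a second decomposition such as $Q\big([0,\lceil t\rceil)\big) \le Q\big([0,t)\big) + Q\big([t,\lceil t\rceil)\big)$ and must control both remainder terms. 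More seriously, the claim that an $O(1)$ bound on the $L^1$ norm of the remainder implies almost-sure vanishing after dividing by $t$ is false in general: $\E|X_t|\le C$ does not give $t^{-1}X_t \to 0$ a.s. To make this step work you need an almost-sure bound on the remainder, for instance by showing that $Y(\omega) := \sup_{0\le s<1} \left|Q\big([0,s)\big)(\omega)\right|$ is integrable (which is \emph{not} automatic from (ii), since (ii) controls each $s$ separately) and then invoking a Borel--Cantelli argument on the stationary sequence $Y\circ\sigma_n$. Without such an argument, a.s.\ convergence along real $t$---and therefore the full $(\sigma_t)$-invariance of $a$---is not established.
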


\section{The eikonal equation in an exterior domain} \label{EIK}

In order to apply the subadditive ergodic theorem to show that \eqref{HJ} homogenizes, we must identify an appropriate subadditive quantity. This is accomplished by considering, for each fixed $x\in \Rd$ and appropriate $\mu \in \R$, the maximal solutions of the eikonal-type boundary-value problem 
\begin{equation} \label{eik}
H(p+Dv,y) = \mu \quad \mbox{in} \ \Rd \setminus \{ x \} \quad \mbox{and} \quad v(x)=0.
\end{equation}
In this section, we study properties of \eqref{eik} which are independent of the random environment, and so we drop the dependence on~$\omega$. We continue to assume the obvious $\omega$-independent analogues of \eqref{reg}, \eqref{coer} and \eqref{sqc}.

If $\Lip$ denotes the set of real-valued global Lipschitz functions on $\Rd$, we write $\SLp$ (resp. $\SLm$) for the subset of $\Lip$ consisting of the functions which are sublinear from below (resp. above)
\begin{equation*}
\SLp : = \Big\{ w \in \Lip \, : \, \liminf_{|y|\to \infty} |y|^{-1} w(y) \geq 0 \Big \} \quad \Big(\mbox{resp.} \quad \SLm := \Big\{ w \in \Lip  \, : \, \limsup_{|y|\to \infty} |y|^{-1} w(y) \leq 0 \Big \} \Big).
\end{equation*}
The set $\SL$ of Lipschitz functions which are strictly sublinear at infinity is $\SL := \SLp\cap \SLm$.

Next we define the quantities $\overline H_+(p)$ and $\overline H_-(p)$ by
\begin{equation}\label{Hbarpm}
\overline H_\pm(p) : = \inf\big\{ \nu \in \R \, : \, \mbox{there exists} \ w \in \SL^\pm \ \mbox{satisfying} \ H(p+Dw,y) \leq \nu \ \mbox{in} \ \Rd \big\},
\end{equation}
with the differential inequality in~\eqref{Hbarpm} interpreted in the viscosity sense. In light of Lemma~\ref{convtrick}, we may write
\begin{equation}\label{Hbarpm2}
\overline H_\pm(p) : = \inf_{w \in \SL^\pm} \esssup_{y\in \Rd} H(p+Dw,y).
\end{equation}
Also define
\begin{equation} \label{Hbarstar}
\overline H_* : =  \inf_{w \in \Lip} \esssup_{y\in \Rd} H(Dw,y) \qquad \mbox{and} \qquad \widehat H(p) : =  \inf_{w \in \SL} \esssup_{y\in \Rd} H(Dw,y). 
\end{equation}
According to the definitions and \eqref{reg}, for every $p\in \Rd$,
\begin{equation}\label{hoops}
\overline H_* \leq \min\big\{ \overline H_+(p) , \, \overline H_-(p) \big\} \leq \widehat H(p) \leq \sup_{y\in \Rd} H(p,y) < \infty.
\end{equation}
It is likewise immediate from the properties of $H$ and \eqref{Hbarpm2} that 
\begin{equation*}\label{}
p\mapsto \overline H_\pm(p) \quad \mbox{and} \quad p\mapsto \widehat H(p)  \quad \mbox{are continuous, coercive and satisfy~\eqref{sqc}.}
\end{equation*}
Indeed, for every $p,q\in \Rd$,
\begin{equation} \label{transconv}
\begin{aligned}
\overline H_\pm\left(\tfrac12 (p+q)\right) & = \inf_{v,w\in \SL^\pm} \esssup_{y\in \Rd} H\left( \tfrac12 (p+q) + \tfrac12 (Dv(y)+Dw(y)),y\right) \\
& \leq \inf_{v,w\in \SL^\pm} \esssup_{y\in \Rd} \Lambda \left( H(p+Dw(y),y),\, H(q+Dv(y),y) \right) \\
& \leq \inf_{v,w\in \SL^\pm} \Lambda \left(\esssup_{y\in \Rd}H(p+Dw(y),y), \, \esssup_{y\in \Rd} H(q+Dv(y),y) \right) \\
& = \Lambda \left(  \overline H_\pm(p), \, \overline H_\pm(q) \right),
\end{aligned}
\end{equation} 
where in each of the last two lines we used~\eqref{Lambda}. 

In our analysis we also need to consider maximal solutions of the eikonal problems 
\begin{equation*}\label{EIK-}
G(Dv,y) = \mu \quad \mbox{in} \ \Rd \setminus \{ x\} \quad \mbox{and} \quad v(x) = 0,
\end{equation*}
where $G(p,y,\omega):= H(-p,y,\omega)$. According to \eqref{Hbarpm2}, we have $\overline H_+(p) = \overline G_-(-p)$, $\widehat H(p) = \widehat G(-p)$ and $\overline H_* = \overline G_*$. The reader should therefore bear in mind that, although many of the intermediate results below are stated only for $\overline H_+$ or $\overline H_-$, analogous results also hold for the other (and we must be careful to keep track of minus signs). 

The following comparison principle was proved in \cite{ASo} for $H$ convex in $p$. The argument needs a modification to generalize to level-set convex $H$.

\begin{prop}\label{comp}
Assume $\mu > \widehat H(p)$ and $u,-v\in \USC(\Rd)$ satisfy
\begin{equation}\label{compde}
H(p+Du,y) \leq \mu \leq H(p+Dv,y) \quad \mbox{in} \ \Rd \setminus K,
\end{equation}
for a compact subset $K$ of $\Rd$, as well as the growth condition
\begin{equation}\label{gc-comp}
\liminf_{|y| \to \infty} |y|^{-1} v(y) \geq 0.
\end{equation}
Then
\begin{equation}\label{comp-conc}
\sup_{\Rd} (u-v) = \max_{K} (u-v).
\end{equation}
\end{prop}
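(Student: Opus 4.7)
The inequality $\sup_{\Rd}(u-v) \geq \max_K(u-v)$ is immediate from $K \subset \Rd$. The plan is to prove the reverse by contradiction: I will assume there is a point $y_0 \in \Rd \setminus K$ with $u(y_0) - v(y_0) > M_K := \max_K(u-v)$ and derive an impossibility by combining a strict subsolution built from the hypothesis $\mu > \widehat H(p)$ with a smooth penalty to force compactness of a maximum and a doubling-of-variables argument. Compared to the convex case treated in~\cite{ASo}, the level-set convex structure makes the behavior of the strict-subsolution margin under convex combinations more delicate, and this is where \eqref{sqclambda} and \eqref{Lambda} must be used carefully.

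First, from the definition of $\widehat H(p)$ in \eqref{Hbarstar} together with Lemma~\ref{convtrick}, I extract $\delta > 0$ and $w \in \SL$ with $H(p+Dw,y) \leq \mu - \delta$ in the viscosity sense. For $\lambda \in (0,1)$ set $u_\lambda := (1-\lambda) u + \lambda w$; an application of \eqref{sqclambda} (applied with the appropriate order of arguments, symmetrizing if $\lambda > 1/2$) shows that $u_\lambda$ is a viscosity subsolution of
\[
H(p + Du_\lambda, y) \leq \mu_\lambda \quad \mbox{in } \Rd \setminus K,
\]
with $\mu_\lambda := \Lambda_{\min(\lambda,1-\lambda)}(\mu-\delta,\mu)$, and $\mu_\lambda < \mu$ by \eqref{Lambda}. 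Next, fix a smooth penalty $\psi(y) := \sqrt{1+|y-y_0|^2} - 1$, so $\psi(y_0) = 0$, $|D\psi| \leq 1$, and $\psi(y) \to \infty$ as $|y|\to\infty$. Using the Lipschitz bound $L$ on $u$ on $\Rd\setminus K$ from coercivity \eqref{coer}, the strict sublinearity of $w$, and the growth hypothesis $\liminf|y|^{-1}v(y) \geq 0$, one checks that $u_\lambda(y) - v(y) - \eta\psi(y) \to -\infty$ as $|y|\to\infty$ whenever $\eta > (1-\lambda)L$. Equicontinuity \eqref{reg} and Lemma~\ref{convtrick} then guarantee that $u_\lambda - \eta\psi$ remains a subsolution of $H(p + \cdot, y) \leq \mu_\lambda + \omega(\eta)$, still strict so long as $\omega(\eta) < \mu - \mu_\lambda$, where $\omega$ is the modulus of equicontinuity of $H$ in $p$ on the relevant ball (which is bounded, by coercivity).

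With these pieces in hand, I apply the classical doubling of variables to
\[
\Phi_\beta(y,z) := u_\lambda(y) - v(z) - \tfrac{1}{2\beta}|y-z|^2 - \eta\psi(y).
\]
The growth estimate guarantees a maximum is attained at some $(y_\beta, z_\beta)$ with $|y_\beta - z_\beta| \to 0$ as $\beta \to 0$. Since $\psi(y_0) = 0$ and $u(y_0) - v(y_0) > M_K$, for $\lambda$ sufficiently small I have
\[
\Phi_\beta(y_0,y_0) = u_\lambda(y_0) - v(y_0) > (1-\lambda)M_K + \lambda\max_K(w-v) \geq \max_{K\times K}\Phi_\beta,
\]
so the limit point $y^* := \lim y_\beta = \lim z_\beta$ lies in $\Rd\setminus K$. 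The viscosity sub/super conditions at $(y_\beta, z_\beta)$, combined with the equicontinuity of $H$ in $y$ from \eqref{reg}, yield in the $\beta\to 0$ limit the inequality $\mu - \mu_\lambda \leq \omega(\eta)$, contradicting the parameter choice.

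The main technical obstacle is the simultaneous management of three competing constraints on the pair $(\lambda, \eta)$: the growth condition requires $\eta > (1-\lambda)L$ (pushing $\lambda$ near $1$), the strictness condition $\omega(\eta) < \mu - \mu_\lambda$ requires $\eta$ small relative to the margin $\mu - \mu_\lambda$ (which shrinks as $\lambda\to 0$), and the max-outside-$K$ condition requires $\lambda$ not too close to $1$. Making all three compatible is the heart of the argument, and the decisive input is the monotonicity $\Lambda_{\lambda_1} \geq \Lambda_{\lambda_2}$ for $\lambda_1 \leq \lambda_2 \leq 1/2$ provided by the iterative construction following \eqref{sqclambda}, which prevents $\mu-\mu_\lambda$ from degenerating too rapidly and enables an admissible joint choice of parameters.
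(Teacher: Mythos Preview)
There is a genuine gap: the three constraints you identify on $(\lambda,\eta)$ are in fact incompatible, and the monotonicity of $\Lambda_\lambda$ does not rescue the argument. The max-outside-$K$ condition forces $\lambda$ near $0$ (for $\lambda$ near $1$ the function $u_\lambda-v$ is close to $w-v$, for which there is no reason the maximum lies off $K$). With $\lambda$ near $0$, the growth requirement $\eta>(1-\lambda)L$ forces $\eta$ to be of order $L$, the full Lipschitz constant of $u$. But then $\omega(\eta)$ is of order $\omega(L)$, which has no reason to be smaller than $\mu-\mu_\lambda$; indeed the monotonicity $\Lambda_{\lambda_1}\geq\Lambda_{\lambda_2}$ for $\lambda_1\le\lambda_2\le\tfrac12$ shows that $\mu_\lambda=\Lambda_\lambda(\mu-\delta,\mu)$ \emph{increases} toward $\mu$ as $\lambda\to 0$, so the margin $\mu-\mu_\lambda$ shrinks to $0$ while $\omega(\eta)$ stays bounded below. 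No joint choice of $(\lambda,\eta)$ satisfies all three constraints, and your final paragraph asserts a resolution without supplying one.

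The paper's proof sidesteps this via a bootstrap rather than a one-shot choice. It first strengthens the growth hypothesis to $\liminf_{|y|\to\infty}|y|^{-1}v(y)>0$ (removed at the end by a further perturbation), introduces the set
\[
E=\Big\{\lambda\in[0,1]:\ \liminf_{|y|\to\infty}|y|^{-1}\big(v(y)-\lambda u(y)\big)\ge 0\Big\},
\]
and shows $\sup E=1$. The point is that for $\lambda\in E$ with $\lambda<1$, the growth needed to compare $\tilde u=(\lambda+\delta)u+(1-\lambda-\delta)w$ against $\tilde v=v+\ep\varphi_R$ is already \emph{almost} furnished by membership in $E$: the small penalty $\ep\varphi_R$ added to $v$ need only absorb the increment $\delta u$, not the full $(1-\lambda)u$. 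Hence $\ep$ (and thus the damage to the supersolution property of $\tilde v$) can be taken as small as one likes for fixed $\lambda$, while the strict-subsolution margin $\mu-\tilde\mu$ with $\tilde\mu=\Lambda_{\lambda_1}(\nu,\mu)$ stays bounded away from zero. Bounded-domain comparison then yields $\lambda+\delta\in E$, closing the bootstrap. This incremental structure is precisely what allows one to reach $\lambda=1$ without ever needing a penalty of size comparable to $L$, and it is the idea missing from your approach.
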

\begin{proof}
With no loss of generality we assume that $p=0$. We first prove the result under the stronger hypothesis that
\begin{equation} \label{addhyp}
\liminf_{|y| \to \infty} |y|^{-1} v(y) > 0.
\end{equation}
Since $u$ is Lipschitz (Lemma~\ref{convtrick}), 
\begin{equation} \label{udn}
A:= \limsup_{|y| \to \infty} |y|^{-1} u(y) < \infty.
\end{equation}
Define
\begin{equation*}
E:= \Big\{ 0 \leq \lambda  \leq 1 \, : \, \liminf_{|y| \to \infty} \frac{v(y) - \lambda u(y)}{|y|} \geq 0 \Big\} \quad \mbox{and} \quad \alpha : = \sup E.
\end{equation*}
According to \eqref{addhyp} and \eqref{udn}, we have $\alpha > 0$ and $[0,\alpha) \subseteq E$. Observe that, for any $ 0< \ep < \alpha$,
\begin{equation*}
\liminf_{|y| \to \infty} \frac{v(y) - \alpha u(y)}{|y|} \geq \liminf_{|y| \to \infty} \frac{v(y) - (\alpha -\ep) u(y)}{|y|} - \ep \limsup_{|y| \to\infty} \frac{u(y)}{|y|} \geq -\ep A,
\end{equation*}
and hence, letting $\ep \to 0$, we get $\alpha \in E$ and $E=[0,\alpha]$. 

\smallskip

Next we show that $\alpha=1$ and hence $E=[0,1]$. Fix $0< \lambda <1 $ such that $\lambda \leq \alpha$ and select $\widehat H(0) < \nu < \mu$ and $w\in \SL$ satisfying 
\begin{equation*}\label{} 
H(Dw,y) \leq \nu \quad \mbox{in} \ \Rd.
\end{equation*}
Also fix $R,\ep > 0$ and $0 \leq \delta < \min\{ \tfrac12(1-\lambda), \ep/2A\}$ and define the auxiliary function
\begin{equation*}
\varphi_R(y):= (R^2+ |y|^2)^{\frac12} - R
\end{equation*}
as well as
\begin{equation*}
\widetilde u : = (\lambda + \delta)u + (1-\lambda-\delta) w \qquad \mbox{and} \qquad\widetilde v : = v + \ep \varphi_R.
\end{equation*}
It follows from \eqref{sqclambda} that
\begin{equation}\label{kabs}
H(D\widetilde u,y) \leq \widetilde \mu \quad \mbox{in} \ \Rd \setminus K
\end{equation}
where $\widetilde \mu:= \Lambda_{\lambda_1} (\widehat H(0),\nu) < \mu$ and $\lambda_1:=\min\{ \lambda,(1-\lambda)/2\}$. On the other hand, according to Lemma~\ref{yeswecan}, we may select $\ep > 0$ sufficiently small, depending on $\lambda$, that for some $\widetilde \mu < \widetilde \nu < \mu$,
\begin{equation} \label{vabs}
H(D\widetilde v,y) \geq \widetilde \nu  \quad \mbox{in} \ \Rd \setminus K.
\end{equation}
Using \eqref{udn}, we find that
\begin{equation*}
\liminf_{|y| \to \infty} \frac{\widetilde v(y)- \widetilde u(y)}{|y|} \geq \ep/2A > 0.
\end{equation*}
We may now apply the usual comparison principle (c.f.~\cite{CIL}) in bounded domains to conclude that
\begin{equation*}
\widetilde u - \widetilde v \leq \max_{K} (\widetilde u - \widetilde v)\quad \mbox{in} \ \Rd \setminus K. 
\end{equation*}
Letting $R\to \infty$, we deduce that $\widetilde u - v \leq \max_{K} (\widetilde u-v)$ in $\Rd \setminus K$, that is,
\begin{equation}\label{buk}
(\lambda + \delta)u + (1-\lambda-\delta) w -v \leq \max_{K} \left( (\lambda + \delta)u + (1-\lambda-\delta) w - v \right).
\end{equation}
Dividing by~$|y|$ and taking the limsup as $|y| \to \infty$ yields, using $w\in \SL$,
\begin{align*}\label{}
\limsup_{|y|\to\infty} |y|^{-1} \left( (\lambda+\delta)u(y) - v(y) \right) & = \limsup_{|y|\to\infty} |y|^{-1} \left( (\lambda+\delta)u(y) + (1-\lambda-\delta) w(y) - v(y) \right) \\
& \leq \limsup_{|y| \to \infty} |y|^{-1} \max_{z\in K} \left( (\lambda+\delta)u(z) + (1-\lambda-\delta) w(z) - v(z) \right) \\
& = 0.
\end{align*}
Hence $\lambda + \delta \in E$. If $\alpha < 1$, then we obtain a contradiction to the definition of $\alpha$ by choosing $\lambda = \alpha$ and $\delta > 0$. It follows that $\alpha=1$ and hence $E=[0,1]$. We may then take $\delta =0$ and send $\lambda \to 1$ in~\eqref{buk} to deduce that $u - v \leq \max_K (u-v)$ in $\Rd$. The proof of the Proposition under the additional hypothesis~\eqref{addhyp} is complete.

To remove the latter, let $\widetilde u$ and $\widetilde v$ be defined as above with $\delta =0$. Since $\liminf_{|y| \to \infty} |y|^{-1} \widetilde v(y) \geq \ep > 0$, we obtain from \eqref{kabs}, \eqref{vabs} and the argument above that
\begin{equation*}
\widetilde u - \widetilde v \leq \max_K (\widetilde u - \widetilde v) \quad \mbox{in} \ \Rd \setminus K.
\end{equation*}
Sending first $\ep \to 0$ and then $\lambda \to 1$ yields \eqref{comp-conc}. 
\end{proof}

We now construct maximal solutions of the eikonal equation which vanish at a particular point, and review some of their important properties. 

\begin{prop} \label{exist}
For each $\mu \geq \overline H_*$ and $x\in \Rd$, there exists a  solution $m_\mu(\,\cdot\,,x) \in \Lip$ of
\begin{equation}\label{metmx}
H(D_ym_\mu(\cdot,x),y) = \mu \quad \mbox{in} \ \Rd \setminus \{ x \}, \qquad m_\mu(x,x) = 0,
\end{equation}
which is \emph{maximal} in the sense that, if $w\in \Lip$ is a subsolution of $H(Dw,y) \leq \mu$ in $\Rd$, then $w(\cdot) - w(x) \leq m_\mu(\, \cdot \, , x)$ in $\Rd$, and \emph{subadditive}, i.e., for every $x,y,z\in \Rd$,
\begin{equation}\label{subaddeq}
m_{\mu}(z,x) \leq m_{\mu}(y,x) + m_{\mu}(z,y).
\end{equation}
Moreover, for every $\mu > \overline H_*$, 
\begin{equation}\label{charHp}
\liminf_{|y|\to \infty} \, |y|^{-1} \big( m_\mu(y,x) - p\cdot y \big) \geq 0 \quad \mbox{if and only if} \quad \mu \geq \overline H_+(p).
\end{equation}
Finally, if $\mu > \widehat H(p)$, then
\begin{equation}\label{mmup}
y\mapsto m_{\mu,p}(y,x):= m_\mu(y,x) - p\cdot (y-x)
\end{equation}
is the unique solution of \eqref{eik} belonging to $\SLp$.
\end{prop}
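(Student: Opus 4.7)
The plan is to realize $m_\mu(\cdot, x)$ as a Perron-type maximal subsolution. I set
\[
m_\mu(y, x) := \sup\bigl\{w(y) : w \in \Lip,\ w(x) = 0,\ H(Dw, \cdot) \leq \mu \ \text{in} \ \Rd\bigr\}.
\]
The admissible set is nonempty since $\mu \geq \overline H_*$ supplies a Lipschitz subsolution, which I translate by a constant so that it vanishes at $x$, and all admissible $w$ share a uniform Lipschitz bound by \eqref{coer}. Hence $m_\mu(\cdot, x) \in \Lip$, and since it is the upper envelope of viscosity subsolutions, it satisfies $H(Dm_\mu, \cdot) \leq \mu$ on all of $\Rd$. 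The maximality property asserted in the statement is immediate from this definition.

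To obtain the matching lower inequality $H(Dm_\mu, \cdot) \geq \mu$ on $\Rd \setminus \{x\}$, I would run the standard Perron bump argument: if a smooth $\varphi$ touches $m_\mu(\cdot, x)$ from below at some $y_0 \neq x$ with $H(D\varphi(y_0), y_0) < \mu$, then the equicontinuity in \eqref{reg} extends the strict inequality to a small ball $B_r(y_0) \subset \Rd \setminus \{x\}$, and $\max\{m_\mu(\cdot, x),\ \varphi + \delta\}$ on $B_r(y_0)$ glued to $m_\mu(\cdot, x)$ outside (for small $\delta > 0$) yields a strictly larger admissible competitor, contradicting maximality. Subadditivity \eqref{subaddeq} then drops out of maximality: $w(z) := m_\mu(z, y)$ is admissible for the problem centered at $y$, so maximality applied at base point $x$ gives $m_\mu(z, y) - m_\mu(x, y) \leq m_\mu(z, x)$, and interchanging the roles of $x$ and $y$ produces \eqref{subaddeq}.

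For the characterization \eqref{charHp}, the direction assuming the liminf is a definitional check: $v(y) := m_\mu(y, x) - p \cdot y$ lies in $\SLp$ and satisfies $H(p + Dv, y) = H(Dm_\mu, y) \leq \mu$, so $\overline H_+(p) \leq \mu$. Conversely, for $\mu > \overline H_+(p)$ I pick $\nu \in (\overline H_+(p), \mu]$ and $w_\nu \in \SLp$ with $H(p + Dw_\nu, \cdot) \leq \nu$; the function $y \mapsto p \cdot (y - x) + w_\nu(y) - w_\nu(x)$ is then admissible for $m_\mu$, so maximality gives $m_\mu(y, x) \geq p \cdot (y - x) + w_\nu(y) - w_\nu(x)$, and dividing by $|y|$ and using $w_\nu \in \SLp$ yields the liminf. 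The endpoint $\mu = \overline H_+(p)$ I would treat by approximation combined with continuity of $m_\mu$ in $\mu$ (a consequence of viscosity stability and maximality).

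Finally, uniqueness of $m_{\mu, p}(\cdot, x)$ among $\SLp$ solutions of \eqref{eik} when $\mu > \widehat H(p)$ comes from a two-sided bracket. One direction: any $\SLp$ solution $v$ is in particular a subsolution, so $y \mapsto p \cdot (y - x) + v(y)$ is admissible for $m_\mu$, giving $v(y) \leq m_{\mu, p}(y, x)$. The reverse inequality follows from Proposition~\ref{comp} applied with $u = m_{\mu, p}(\cdot, x)$, the compact set $K = \{x\}$, and the supersolution $v$, whose $\SLp$ membership supplies the growth condition \eqref{gc-comp}; this yields $\sup_{\Rd}(m_{\mu, p}(\cdot, x) - v) = m_{\mu, p}(x, x) - v(x) = 0$. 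The main obstacles I anticipate are the careful execution of the Perron bump step near the singular point $x$ and the endpoint $\mu = \overline H_+(p)$ in \eqref{charHp}: since the $\SLp$ property need not be preserved under locally uniform limits as $\nu \downarrow \overline H_+(p)$, I would approach it through monotonicity and stability of $m_\mu$ in $\mu$ rather than by compactness of the approximating $w_\nu$.
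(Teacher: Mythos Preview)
Your proposal is correct and follows the same Perron-based route as the paper: define $m_\mu$ as the supremum of normalized global subsolutions, read off maximality and subadditivity directly, obtain the supersolution property by the standard bump, and use Proposition~\ref{comp} for uniqueness. The only structural difference is that the paper treats $\mu = \overline H_*$ by defining $m_{\overline H_*}$ as the decreasing limit $\lim_{\mu \searrow \overline H_*} m_\mu$ rather than directly; your direct approach also works, since the infimum defining $\overline H_*$ is in fact attained (uniform Lipschitz bounds from \eqref{coer}, Arzel\`a--Ascoli, and viscosity stability produce a global subsolution at level $\overline H_*$).

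On the endpoint $\mu = \overline H_+(p)$ in \eqref{charHp} that you flag: you are right that locally uniform continuity of $\mu \mapsto m_\mu$ alone does not control the liminf at infinity, and monotonicity goes the wrong way. What does work is the \emph{quantitative} continuity coming from maximality and \eqref{reg}: for $\mu' > \mu$, the function $y \mapsto m_\mu(y,x) + c\,|y-x|$ is a global subsolution at level $\mu + \omega_H(c)$ (where $\omega_H$ is the modulus from \eqref{reg} on the relevant gradient ball), so maximality gives $m_{\mu'}(y,x) \leq m_\mu(y,x) + c(\mu'-\mu)\,|y-x|$ with $c(s) \to 0$ as $s \to 0$. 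Dividing by $|y|$ and sending $\mu' \downarrow \mu$ then transfers the liminf bound from $m_{\mu'}$ to $m_\mu$. The paper is equally terse at this step, simply citing \eqref{Hbarpm2} and maximality.
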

\begin{proof}
The existence of $m_\mu$ is a consequence of Perron's method and the fact that the coercivity of $H$ provides supersolutions to which we may apply the comparison principle of Proposition~\ref{comp}. It follows from \eqref{coer} that, for every $\mu \in \R$, there exists $C_\mu > 0$ sufficiently large so that, for every $x\in \Rd$, the function $\psi(y):= C_\mu |y-x|$ satisfies 
\begin{equation}\label{supertest}
H(D\psi, y) \geq \widehat \mu:=\max \{ \mu , \widehat H(0) \} + 1\quad\mbox{in} \ \Rd \setminus \{ x \}.
\end{equation}
Fix $\mu > \overline H_*$, $x\in \Rd$, define, for each $y\in \Rd$,
\begin{equation}\label{defmmu}
m_\mu(y,x) : = \sup \big\{ w(y) -w(x) \, : \, w\in \Lip \ \mbox{and} \ H(Dw,y) \leq \mu \ \mbox{in} \ \Rd \big\},
\end{equation}
and notice that, in view of Lemma~\ref{convtrick}, the differential inequality in \eqref{defmmu} may be interpreted either in the viscosity sense or in the almost everywhere sense. The set of admissible $w$ in \eqref{defmmu} is nonempty since $\mu > \overline H_*$. Since $\widehat \mu > \widehat H(p)$ and $\widehat \mu > \mu$, Proposition~\ref{comp}, applied to each of the admissible $w$ in \eqref{defmmu}, yields that 
\begin{equation}\label{uppbndme}
m_\mu(y,x) \leq C_{ \mu} |y-x|.
\end{equation}
In particular, $m_\mu(y,x)$ is finite. That $m_\mu(\cdot,x)$ is a subsolution of $H(Dm_\mu(\cdot,x),y)\leq \mu$ in $\Rd\setminus \{x\}$ is immediate, since it is the supremum of a collection of subsolutions. In fact, by \eqref{metmx} and Lemma~\ref{convtrick}, we see that $m_\mu(\cdot,x)$ is a \emph{global} subsolution, that is,
\begin{equation}\label{globalsubeq}
H(Dm_\mu(\cdot,x),y) \leq \mu \quad\mbox{in} \ \Rd.
\end{equation}
The fact that $m_\mu(\cdot,x)$ is a supersolution of \eqref{metmx} in the viscosity sense follows from the standard Perron argument adapted to viscosity solutions (c.f. \cite{Ba,CIL}).
The maximality of $m_\mu(\cdot,x)$ is obvious from its definition. The subadditivity property is immediate from the maximality of $m_\mu(\, \cdot \,,y)$ and the observation that for fixed $x,y\in \Rd$, $w(z) := m_\mu(z,x) - m_\mu(y,x)$ is a subsolution of 
\begin{equation*}
H(Dw,z) \leq \mu \quad \mbox{in} \ \Rd.
\end{equation*}
The characterization of $\overline H_+$ in  \eqref{charHp} is a consequence of \eqref{Hbarpm2} and the maximality of $m_\mu(\cdot,x)$, while the last claim follows from Proposition~\ref{comp} and \eqref{charHp}. This completes the construction for $\mu > \overline H_*$.

Observe that $m_\mu(\cdot,x)$ is Lipschitz with constant $C_\mu$. Indeed, owing to \eqref{subaddeq} and \eqref{uppbndme}, we have, for every $x,y,z\in \Rd$,
\begin{equation}\label{mmuLip}
m_\mu(z,x) - m_\mu(y,x) \leq m_\mu(z,y) \leq C_\mu|z-y|. 
\end{equation}
Moreover, it is immediate from \eqref{defmmu} that, for each $x,y\in\Rd$,
\begin{equation}\label{mmuinc}
\mu \mapsto m_\mu(y,x) \quad \mbox{is increasing.}
\end{equation}
Therefore, we may define
\begin{equation}\label{mmubot}
m_{\overline H_*}(y,x):= \inf_{\mu > \overline H_*} m_\mu(y,x) = \lim_{\mu \ssearrow \overline H_*} m_\mu(y,x).
\end{equation}
The infimum in \eqref{mmubot} is finite, which also yields that, as $\mu \ssearrow \overline H_*$, $m_\mu$ converges to $m_{\overline H_*}$ locally uniformly in $\Rd$. This implies that $m_{\overline H_*}(\, \cdot\,,x)$ is a solution of \eqref{metmx} with $\mu=\overline H_*$. The maximality of $m_{\overline{H}_*}$ is immediate from Proposition~\ref{comp} and \eqref{mmubot} and the subaddivity of $m_{\overline{H}_*}$ is immediate from that of $m_\mu$ with $\mu > \overline H_*$ and~\eqref{mmubot}. In light of \eqref{hoops}, the final two statements are vacuous in the case $\mu=\overline H_*$.
\end{proof}

We continue with several remarks that are used later in the paper.

\begin{remark}\label{mucontin}
The function $\mu \mapsto m_\mu(y,x)$ is continuous in $\mu$ in the sense that, if $\mu_j,\mu\geq \overline H_*$ are such that $\mu=\lim_{j\to \infty} \mu_j$, then, as $j\to \infty$ and locally uniformly in $\Rd\times\Rd$,
\begin{equation*}\label{}
m_{\mu_j} (y,x) \rightarrow m_\mu(y,x). 
\end{equation*}
This claim follows easily from an argument very similar to the final part of proof of Proposition~\ref{exist}, using Lipschitz estimates and the maximality property. 
\end{remark}

\begin{remark} \label{strcn}
It is useful to note that \eqref{mmuinc} can be improved. 
In fact, we claim that for every $k>1$, there exists $c>0$ such that, for all $\overline H_* \leq \nu < \mu \leq k$ with $\mu - \nu \geq k^{-1}$ and $x,y\in \Rd$,
\begin{equation}\label{str}
m_\mu(y,x) \geq m_\nu(y,x) + c|x-y|.
\end{equation}
This follows again from the maximality property and the observation that \eqref{reg} with $R=C_\mu+1$ yields that, for some constant $c> 0$ small enough, $y\mapsto m_\nu(y,x) + c |y-x|$ is a subsolution of $H(Du,y)\leq \mu$ in $\Rd$. 
\end{remark}

\begin{remark}\label{upsidedown}
Recall from the discussion preceding Proposition~\ref{comp} that $\overline H_* = \overline G_*$. Denote by $n_\mu$ the analogue of $m_\mu$ for $G$, that is,
\begin{equation} \label{defnmu}
n_\mu(y,x):= \sup \big\{ w(y) -w(x) \, : \, w\in \Lip \ \mbox{and} \ H(-Dw,y) \leq \mu \ \mbox{in} \ \Rd \big\}.
\end{equation}
By interchanging $w$ and $-w$ in \eqref{defmmu} and \eqref{defnmu} we see that, for all $x,y\in \Rd$,
\begin{equation}\label{updown}
n_\mu(y,x) = m_\mu(x,y).
\end{equation}
According to Lemma~\ref{convtrick}, $y\mapsto -n_\mu(y,x)=-m_\mu(x,y)$ satisfies
\begin{equation}\label{nmuglss}
H(D(-n_\mu(y,x)),y) \leq \mu \quad \mbox{in} \ \Rd.
\end{equation}
\end{remark}

\begin{remark} \label{dom}
It turns out that domination by $m_\mu$ is a necessary and sufficient condition for $w\in \Lip$ to be a subsolution, i.e., for each $w\in \Lip$ and $\mu \geq \overline H_*$,
\begin{equation} \label{domss}
w(y) - w(x) \leq m_\mu(y,x) \ \ \mbox{for every} \ x,y\in \Rd \quad \mbox{if and only if} \quad H(Dw,y) \leq \mu \ \mbox{in} \ \Rd.
\end{equation}
The necessity is proved in Proposition~\ref{exist}. To prove the other direction, we select a smooth test function $\phi$ and $x_0\in\Rd$ such that 
\begin{equation*}
y\mapsto (w-\phi)(y) \quad \mbox{has a local maximum at} \ y=x_0.
\end{equation*}
Assuming the first statement of \eqref{domss}, we deduce that
\begin{equation*}
y\mapsto w(x_0)- m_\mu(x_0,y) - \phi(y)  \quad  \mbox{has a local maximum at} \ y=x_0,
\end{equation*}
and, hence, in view of \eqref{updown},
\begin{equation*}
y\mapsto -n_\mu(y,x_0) - \phi(y) \quad \mbox{has a local maximum at} \ y=x_0.
\end{equation*}
In light of \eqref{nmuglss}, we obtain the desired conclusion that $H(D\phi(x_0) ,x_0) \leq \mu$.
\end{remark}

\section{Homogenization of the exterior eikonal problem}
\label{MP}

We continue the study of \eqref{eik}, now requiring $H$ to depend on the random variable $\omega$ with all of the hypotheses of Section~\ref{P} in force. Since each of the quantities studied in Section~\ref{EIK} is well-defined for every fixed $\omega\in \Omega$, we write
\begin{equation}\label{Hbarpm2-o}
\overline H_\pm(p,\omega) : = \inf_{w \in \SL^\pm} \esssup_{y\in \Rd} H(p+Dw(y),y,\omega),
\end{equation}
\begin{equation} \label{Hbarstar-o}
\overline H_*(\omega) : =  \inf_{w \in \Lip} \esssup_{y\in \Rd} H(Dw(y),y,\omega)  \qquad \mbox{and}\qquad \widehat H(p,\omega): =  \inf_{w \in \SL} \esssup_{y\in \Rd} H(Dw(y),y,\omega).
\end{equation}
Clearly $\omega \mapsto \overline H_*(\omega)$ and $(p,\omega) \mapsto \overline H_\pm(p,\omega),\ \widehat H(p,\omega)$ are measurable with respect to the appropriate $\sigma$-algebras. Moreover, they are invariant under the translation group $( \tau_y )_{y\in\Rd}$. Hence, in view of the ergodic hypothesis~\eqref{ergo}, there exists an event $\widehat\Omega \in \mathcal F$ of full probability such that, for every $\omega\in \widehat\Omega$ and $p\in \Rd$,
\begin{equation}\label{Hbarcon}
\overline H_\pm(p,\omega) = \overline H_\pm(p), \qquad \overline H_*(\omega) = \overline H_* \qquad \mbox{and} \qquad \widehat H(p,\omega) = \widehat H(p).
\end{equation}
Adapting \eqref{defmmu}, we define, for every $\mu \geq \overline H_*$,
\begin{equation}\label{defmmu-o}
m_\mu(y,x,\omega) : = \sup \big\{ w(y) -w(x) \, : \, w\in \Lip \ \mbox{and} \ H(Dw,y,\omega) \leq \mu \ \mbox{in} \ \Rd \big\}.
\end{equation}
It is immediate that the $m_\mu$'s are measurable with respect to the appropriate $\sigma$-algebras and possess, for each fixed $\omega\in \Omega$, the properties in Proposition~\ref{exist}. Finally, the stationarity of $H$ yields that, for all $\mu \geq \overline H_*$ and $x,y,z\in\Rd$,
\begin{equation} \label{dfmeas-o}
 m_\mu(y,x,\tau_z\omega) = m_\mu(y+z,x+z,\omega).
\end{equation}

Next we utilize the subadditive ergodic theorem (Proposition~\ref{SAET}) and the subadditivity of $m_\mu$ to essentially homogenize the eikonal equation. The argument is the same as the one given in \cite{ASo,ASo2,DS1}.

\begin{prop} \label{limitfuns}
There exists an event $\Omega_1\in\mathcal F$ of full probability and $\overline m:\left[\,\overline H_*,\infty\right)\times\Rd \to \R$, $\overline m = \overline m_\mu(y)$, such that, for every $\omega\in \Omega_1$, $x,y\in \Rd$, and $\mu \geq \overline H_*$,
\begin{equation}\label{limiteq}
\lim_{t\to \infty} t^{-1}m_\mu(ty,tx,\omega) = \overline m_\mu(y-x).
\end{equation}
For fixed $\mu \geq \overline H_*$, $\overline m_\mu(\cdot) \in \Lip$ is positively homogeneous and convex and, for $y\neq 0$, $\mu \mapsto \overline m_\mu(y)$ is strictly increasing on $\big(\overline H_*,\infty\big)$. 
\end{prop}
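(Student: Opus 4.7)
The plan is to apply Proposition~\ref{SAET} along each ray from the origin, invoke ergodicity to force the limit to be deterministic, extend to all $(y,\mu)$ using density plus the Lipschitz/continuity estimates on $m_\mu$, and finally pass from the case $x=0$ to general $(x,y)$ by stationarity.

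First, I fix $y\in \Rd$ and $\mu\ge \overline H_*$, take the semigroup $\sigma_t:=\tau_{ty}$, and define $Q([a,b))(\omega):=m_\mu(by,ay,\omega)$. Properties (i)--(iii) of a continuous subadditive process follow, respectively, from the stationarity~\eqref{dfmeas-o}, the Lipschitz bound~\eqref{mmuLip} (which gives $|Q([a,b))|\le C_\mu(b-a)|y|$), and the subadditivity~\eqref{subaddeq}. Proposition~\ref{SAET} then produces a $(\sigma_t)$-invariant random variable $a_{y,\mu}$ with $t^{-1}m_\mu(ty,0,\omega)\to a_{y,\mu}(\omega)$ a.s. Combining~\eqref{mmuLip}, \eqref{subaddeq}, and stationarity yields $|m_\mu(ty+z,z,\omega)-m_\mu(ty,0,\omega)|\le 2C_\mu|z|$ for every $z\in \Rd$, so $a_{y,\mu}$ is invariant under the full group $(\tau_z)_{z\in \Rd}$; ergodicity~\eqref{ergo} thus makes $a_{y,\mu}$ a.s.\ constant, and I call this constant $\overline m_\mu(y)$. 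Taking $\Omega_1$ to be the intersection of these a.s.\ events over a countable dense subset of $\Rd\times[\overline H_*,\infty)$, the equi-Lipschitz bound from~\eqref{mmuLip} in $y$ and the continuity in $\mu$ from Remark~\ref{mucontin} extend the convergence to all $(y,\mu)$ on $\Omega_1$ and show that $y\mapsto \overline m_\mu(y)$ is $C_\mu$-Lipschitz.

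The main technical obstacle will be handling general $x\ne 0$. By stationarity, $m_\mu(ty,tx,\omega)=m_\mu(t(y-x),0,\tau_{tx}\omega)$, but the shift $\tau_{tx}$ depends on $t$, so the a.s.\ convergence from the case $x=0$ does not transfer directly. My plan is to apply Proposition~\ref{SAET} a second time with semigroup $\sigma_s=\tau_{s(y-x)}$, and then combine the resulting deterministic limit with the Lipschitz estimates and the subadditivity~\eqref{subaddeq} to pin both $\liminf$ and $\limsup$ of $t^{-1}m_\mu(ty,tx,\omega)$ to the common value $\overline m_\mu(y-x)$. Once this is established, the qualitative properties follow easily: $\overline m_\mu\in \Lip$ inherits the Lipschitz constant $C_\mu$ from~\eqref{mmuLip}; positive homogeneity $\overline m_\mu(sy)=s\overline m_\mu(y)$ for $s>0$ results from the change of variables $t\mapsto t/s$ in the defining limit; convexity comes from positive homogeneity together with subadditivity $\overline m_\mu(y_1+y_2)\le \overline m_\mu(y_1)+\overline m_\mu(y_2)$, itself obtained by passing the relation $m_\mu(t(y_1+y_2),0,\omega)\le m_\mu(ty_1,0,\omega)+m_\mu(ty_2,0,\tau_{ty_1}\omega)$ to the limit via the general-$x$ convergence; finally, strict monotonicity of $\mu\mapsto \overline m_\mu(y)$ on $(\overline H_*,\infty)$ for $y\ne 0$ follows immediately from Remark~\ref{strcn}, which delivers $\overline m_\mu(y)\ge \overline m_\nu(y)+c|y|>\overline m_\nu(y)$ whenever $\mu>\nu$.
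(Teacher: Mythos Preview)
Your treatment of the case $x=0$ is correct and matches the paper's approach: verify the hypotheses of Proposition~\ref{SAET} for the process $Q([a,b))=m_\mu(by,ay,\omega)$, use the Lipschitz bound to upgrade the $(\tau_{ty})$-invariance of the limit to full $(\tau_z)$-invariance, conclude by ergodicity, and then pass from a countable dense set of $(y,\mu)$ to all $(y,\mu)$ via~\eqref{mmuLip} and Remark~\ref{mucontin}. The derivations of positive homogeneity, convexity, and strict monotonicity in $\mu$ are also fine and agree with the paper.

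The gap is in your plan for general $x$. Applying Proposition~\ref{SAET} ``a second time with semigroup $\sigma_s=\tau_{s(y-x)}$'' simply reproduces the $x=0$ limit in the direction $y-x$, which you already have; it gives no new information about $t^{-1}m_\mu(ty,tx,\omega)$. And the subadditivity~\eqref{subaddeq} together with the Lipschitz estimate only yields
\[
\overline m_\mu(y)-\overline m_\mu(x)\ \le\ \liminf_{t\to\infty}\,t^{-1}m_\mu(ty,tx,\omega)\ \le\ \limsup_{t\to\infty}\,t^{-1}m_\mu(ty,tx,\omega)\ \le\ \overline m_\mu(y)+\overline n_\mu(x),
\]
which does not pin the limit to $\overline m_\mu(y-x)$. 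One cannot set up a subadditive process along a single semigroup whose $Q([0,t))$ is $m_\mu(ty,tx,\omega)$ and which satisfies both~(i) and~(iii); the two endpoints scale in different directions.

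The paper handles this step differently: once the $x=0$ convergence is known for every direction, one combines Egoroff's theorem with the (multiparameter) ergodic theorem, exactly as in the proof of Lemma~\ref{ball1d}. For fixed $\epsilon>0$, Egoroff gives an event $E_\epsilon$ of probability $\ge 1-\epsilon$ on which $s^{-1}m_\mu(s(y-x),0,\cdot)\to\overline m_\mu(y-x)$ uniformly; the ergodic theorem then guarantees, a.s.\ in $\omega$, that for large $t$ one can find $z$ with $|z-tx|\le C\epsilon^{1/\d}t$ and $\tau_z\omega\in E_\epsilon$. Using $m_\mu(ty,tx,\omega)=m_\mu(ty-z,0,\tau_z\omega)+O(|tx-z|)$ from~\eqref{mmuLip} and~\eqref{dfmeas-o}, one obtains $t^{-1}m_\mu(ty,tx,\omega)\to\overline m_\mu(y-x)$. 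You should replace your second SAET application by this argument.
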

\begin{proof}
The existence of $\overline m_\mu \in \Lip$ and the limit \eqref{limiteq} are obtained, in the case $x=0$, from an application of the subadditive ergodic theorem. We must simply check that the hypotheses of this theorem are satisfied, with \eqref{subaddeq} in mind. This is easy, and since the argument is nearly the same as in \cite{ASo,ASo2,DS1}, we omit the proof. The result for general $x$ then follows from the Lipschitz estimates for $m_\mu$ and a combination of the ergodic theorem and Egoroff's theorem. 

It is clear from the form of the limit \eqref{limiteq} that $\overline m_\mu$ is positively homogeneous. Observe also that \eqref{subaddeq} implies that, for every $y,z\in \Rd$, 
\begin{equation*}
\overline m_\mu(y) = \lim_{t\to\infty} t^{-1} m_\mu(ty,0,\omega) \leq \lim_{t\to\infty} t^{-1} \big( m_\mu(tz,0,\omega) +  m_\mu(ty,tz,\omega) \big)
= \overline m_\mu(z) + \overline m_\mu(y-z).
\end{equation*}
Therefore $m_\mu$ is convex. That the map $\mu \mapsto \overline m_\mu(y)$ is strictly increasing for $y\neq 0$ is a consequence of \eqref{str}. 
\end{proof}

Next we study the relationship between $\overline m_\mu$ and $\overline H_{\pm}$. The fact is that their $\mu$-sublevel sets are dual to each other as convex sets. We have:

\begin{lem}
For every $\mu > \overline H_*$,
\begin{equation}\label{formmbareq2}
\overline m_\mu(y) = \max\big\{ p\cdot y \, : \, \overline H_+(p) \leq \mu \big\}.
\end{equation}
\end{lem}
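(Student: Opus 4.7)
My plan is to interpret the identity as a convex-duality statement. Since $\overline{m}_\mu$ is convex and positively homogeneous by Proposition~\ref{limitfuns}, it is the support function of the closed convex set
\begin{equation*}
K_\mu := \big\{ p \in \Rd \, : \, p \cdot y \leq \overline{m}_\mu(y) \ \text{for all} \ y \in \Rd \big\},
\end{equation*}
that is, $\overline{m}_\mu(y) = \sup_{p \in K_\mu} p \cdot y$. The lemma therefore reduces to the set equality $K_\mu = \{p : \overline{H}_+(p) \leq \mu\}$. The latter set is compact, being closed by the continuity of $\overline{H}_+$ and bounded by its coercivity, so the supremum in the formula is a maximum.

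For the inclusion $\{\overline{H}_+(p) \leq \mu\} \subseteq K_\mu$, fix $p$ with $\overline{H}_+(p) \leq \mu$. The characterization~\eqref{charHp} from Proposition~\ref{exist}, applied for each $\omega \in \Omega_1 \cap \widehat\Omega$ (on which $\overline{H}_+(p,\omega) = \overline{H}_+(p)$), gives
\begin{equation*}
\liminf_{|z|\to\infty} |z|^{-1} \big( m_\mu(z,0,\omega) - p \cdot z \big) \geq 0.
\end{equation*}
Specializing to $z = ty$ for fixed $y \neq 0$ and sending $t \to \infty$ with the limit identity in Proposition~\ref{limitfuns} yields $\overline{m}_\mu(y) \geq p \cdot y$, so $p \in K_\mu$.

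For the reverse inclusion, fix $p \in K_\mu$, i.e.\ $p \cdot y \leq \overline{m}_\mu(y)$ for all $y$. I will show
\begin{equation*}
\liminf_{|z|\to\infty} |z|^{-1} \big( m_\mu(z,0,\omega) - p \cdot z \big) \geq 0 \quad \text{for all} \ \omega \in \Omega_1,
\end{equation*}
which via~\eqref{charHp} forces $\overline{H}_+(p) \leq \mu$. The key step is promoting the pointwise convergence in Proposition~\ref{limitfuns} to uniform convergence on the unit sphere $\{|e|=1\}$: by~\eqref{mmuLip}, the functions $e \mapsto t^{-1} m_\mu(te,0,\omega)$ are Lipschitz in $e$ with a constant $C_\mu$ independent of $t$, and converge pointwise to $\overline{m}_\mu$ on a dense countable subset of the sphere (for $\omega \in \Omega_1$), hence converge uniformly on this compact set by a standard equicontinuity argument. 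Therefore, for any $\epsilon > 0$ and all $t$ sufficiently large,
\begin{equation*}
t^{-1} m_\mu(te,0,\omega) \geq \overline{m}_\mu(e) - \epsilon \geq p \cdot e - \epsilon \quad \text{uniformly in} \ |e|=1,
\end{equation*}
which, writing an arbitrary $z \in \Rd$ as $z = te$ with $t = |z|$, rearranges to $|z|^{-1}(m_\mu(z,0,\omega) - p \cdot z) \geq -\epsilon$ for all sufficiently large $|z|$.

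The main obstacle is precisely this uniformization over the unit sphere: the ray-wise limits supplied by Proposition~\ref{limitfuns} a priori give no information about $m_\mu(z,0,\omega)$ as $z$ escapes to infinity along varying directions, whereas the characterization~\eqref{charHp} demands a directionally uniform liminf. Once the equicontinuity-compactness argument is in place, everything else is a routine combination of the tools already built in Sections~\ref{EIK} and~\ref{MP}.
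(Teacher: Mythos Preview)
Your proof is correct and follows essentially the same route as the paper's: both reduce the identity to showing $K_\mu = \{p : \overline H_+(p) \leq \mu\}$ via the equivalence chain $p\in K_\mu \Leftrightarrow \overline m_\mu(y)\geq p\cdot y$ for all $y \Leftrightarrow \liminf_{|y|\to\infty}|y|^{-1}(m_\mu(y,0,\omega)-p\cdot y)\geq 0 \Leftrightarrow \mu\geq\overline H_+(p)$, invoking \eqref{charHp} at the last step. The paper compresses this into the display \eqref{obvious} without spelling out the middle equivalence; you have correctly identified that passing from ray-wise limits to the full $\liminf$ over $|y|\to\infty$ requires the equicontinuity (uniform Lipschitz) of $e\mapsto t^{-1}m_\mu(te,0,\omega)$ on the unit sphere, and supplied that argument explicitly.
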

\begin{proof}
Since $\overline m_\mu$ is positively homogeneous and convex it can be written as
\begin{equation}\label{formmbareq}
\overline m_\mu(y) = \max\{ p\cdot y \, : \, p \in K_\mu \} \qquad \mbox{where} \qquad K_\mu := \bigcup_{z\in \Rd} \partial \overline m_\mu(z) = \bigcup_{|z| = 1} \partial \overline m_\mu(z),
\end{equation}
where $\partial f(z)$ denotes the usual subdifferential of a convex function $f$ at a point $z$.

The set $K_\mu$ is nonempty, closed and convex. Observe that, by \eqref{formmbareq}, \eqref{limiteq},  \eqref{charHp} and the positive homogeneity of $\overline m_\mu$, we have the following chain of equivalences:
\begin{align} \nonumber
p\in K_\mu  & \quad \mbox{if and only if} \quad \overline m_\mu(y) \geq p\cdot y \quad \mbox{for every} \ y\in\Rd \\ \label{obvious}
& \quad \mbox{if and only if} \quad \liminf_{|y|\to \infty} |y|^{-1} \big( m_\mu(y,0,\omega) - p\cdot y\big) \geq 0 \quad \mbox{for every} \ \omega\in \Omega_1 \\ & \quad \mbox{if and only if} \quad \mu \geq H_+(p). \nonumber
\end{align}
Hence $K_\mu = \{ p \, : \, \overline H_+(p) \leq \mu \}$.
\end{proof}

\begin{cor} \label{charHstr}
We have
\begin{equation}\label{minchar}
\overline H_* = \min_{p\in \Rd} \overline H_+(p).
\end{equation}
\end{cor}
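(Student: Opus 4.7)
The easy direction is immediate: by~\eqref{hoops}, $\overline H_* \leq \overline H_+(p)$ for every $p\in\Rd$, so $\overline H_* \leq \inf_{p\in\Rd} \overline H_+(p)$. Moreover, because $\overline H_+$ is continuous and coercive (as noted after~\eqref{hoops}), the infimum is actually attained, so $\inf_{p} \overline H_+(p) = \min_{p} \overline H_+(p)$.

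For the reverse inequality, the plan is to exploit the duality~\eqref{formmbareq2} together with the coercivity of $\overline H_+$. For each $\mu > \overline H_*$, Proposition~\ref{limitfuns} provides the function $\overline m_\mu$ which is finite-valued, convex and positively homogeneous on $\Rd$; in particular, its subdifferential at any point is nonempty, and therefore the set
\begin{equation*}
K_\mu = \bigcup_{z\in\Rd} \partial \overline m_\mu(z) = \{ p\in\Rd \, : \, \overline H_+(p) \leq \mu\}
\end{equation*}
from the preceding lemma is nonempty. I would then pick an arbitrary $p_\mu \in K_\mu$ for each $\mu \in (\overline H_*, \overline H_* + 1]$.

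The next step is to pass to the limit $\mu \searrow \overline H_*$. Since $\overline H_+$ is coercive, the bound $\overline H_+(p_\mu) \leq \mu \leq \overline H_* + 1$ forces the family $\{ p_\mu \}$ to be bounded in $\Rd$. Extract a sequence $\mu_n \searrow \overline H_*$ with $p_{\mu_n} \to p_*$ for some $p_* \in \Rd$. By the continuity of $\overline H_+$,
\begin{equation*}
\overline H_+(p_*) = \lim_{n\to\infty} \overline H_+(p_{\mu_n}) \leq \lim_{n\to \infty} \mu_n = \overline H_*,
\end{equation*}
which yields $\min_{p} \overline H_+(p) \leq \overline H_+(p_*) \leq \overline H_*$. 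Combining the two inequalities gives~\eqref{minchar}.

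The only step that requires any genuine care is verifying that $K_\mu$ is nonempty for $\mu > \overline H_*$, but this follows for free from the fact that $\overline m_\mu$ is a real-valued convex function on $\Rd$ (hence admits a nonempty subdifferential at any interior point) together with the identification of $K_\mu$ with a sublevel set of $\overline H_+$ established in the lemma. Everything else is a compactness argument powered by the coercivity of $\overline H_+$.
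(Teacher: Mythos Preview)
Your proof is correct. It is close in spirit to the paper's but differs in how it reaches the critical level $\overline H_*$. The paper works directly at $\mu = \overline H_*$: it takes $p \in \partial \overline m_{\overline H_*}(0)$ (nonempty since $\overline m_{\overline H_*}$ is convex with $\overline m_{\overline H_*}(0)=0$), then uses the monotonicity $\mu \mapsto \overline m_\mu(y)$ to deduce $p\cdot y \leq \overline m_\mu(y)$ for every $\mu > \overline H_*$, and concludes $\overline H_+(p) \leq \overline H_*$ from the equivalence~\eqref{obvious}. You instead take $p_\mu \in K_\mu$ for each $\mu > \overline H_*$ and pass to the limit via a compactness argument powered by the coercivity and continuity of $\overline H_+$. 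Both routes rest on the same key fact---nonemptiness of $K_\mu$ from the convexity of $\overline m_\mu$---but the paper's use of monotonicity is marginally shorter, since it avoids extracting a subsequence and invoking continuity at the limit point.
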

\begin{proof}
According to Proposition~\ref{limitfuns}, $\overline m_{\overline H_*}(\cdot)$ is convex and $\overline m_{\overline H_*} (0)=0$. Therefore, there exists $p\in \partial m_{\overline H_*}(0)$, that is, $p\cdot y \leq \overline m_{\overline H_*}(y)$ for all $y\in \Rd$. Since $\mu \mapsto m_\mu(y)$ is increasing, it follows that $p\cdot y \leq \overline m_\mu(y)$ for all $y\in \Rd$ and $\mu > \overline H_*$. According to~\eqref{obvious}, we deduce that $\overline H_+(p) \leq \overline H_*$. The claim now follows using~\eqref{hoops}.
\end{proof}

\begin{cor}
For all $p\in \Rd$,
\begin{equation}\label{trip}
\overline H_+(p) = \overline H_-(p)
\end{equation}
\end{cor}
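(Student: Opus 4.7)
My plan is to derive \eqref{trip} from the duality formula \eqref{formmbareq2} applied in two ways: once to $H$ and once to the reflected Hamiltonian $G(p,y,\omega):=H(-p,y,\omega)$. Since $G$ satisfies all of \eqref{preserve}--\eqref{sqc} with the same constants, every result from Sections~\ref{EIK} and~\ref{MP} applies verbatim to $G$: it admits maximal metric solutions $n_\mu$, the limits $\overline n_\mu(y):=\lim_{t\to\infty} t^{-1} n_\mu(ty,0,\omega)$ exist almost surely by Proposition~\ref{limitfuns}, and the analogue of \eqref{formmbareq2} holds for $\overline n_\mu$ in terms of $\overline G_+$.

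The first step is to identify $\overline G_+(p)=\overline H_-(-p)$: the substitution $\widetilde w=-w$ in \eqref{Hbarpm2-o} interchanges $\SLp$ and $\SLm$ and sends $p$ to $-p$. Combined with the duality for $G$, this yields, for every $\mu>\overline H_*$,
\[
\overline n_\mu(y)=\max\{-q\cdot y:\overline H_-(q)\leq\mu\}.
\]

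The second step is to compute $\overline n_\mu$ a different way. Using the identity $n_\mu(y,x,\omega)=m_\mu(x,y,\omega)$ from Remark~\ref{upsidedown} together with Proposition~\ref{limitfuns} (with the arguments of $m_\mu$ swapped), one obtains $\overline n_\mu(y)=\overline m_\mu(-y)$. Applying \eqref{formmbareq2} then gives
\[
\overline n_\mu(y)=\max\{-q\cdot y:\overline H_+(q)\leq\mu\}.
\]
Equating the two expressions shows that the closed convex sets $K_\mu^\pm:=\{p:\overline H_\pm(p)\leq\mu\}$ have identical support functions (evaluated at $-y$, which ranges over $\Rd$), and hence $K_\mu^+=K_\mu^-$ for every $\mu>\overline H_*$.

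Finally, since $\overline H_\pm$ are continuous, coercive, and bounded below by $\overline H_*$ by \eqref{hoops}, equality of their sublevel sets on the interval $(\overline H_*,\infty)$ determines the functions uniquely, giving \eqref{trip}. The only bookkeeping subtlety I anticipate is keeping track of the two separate sign reversals in the reflection (negating $p$ inside $H$, versus swapping the growth classes $\SLp\leftrightarrow\SLm$), but both are absorbed by the single substitution $\widetilde w=-w$, and continuity of $\overline H_\pm$ handles the boundary case $\mu=\overline H_*$ without further work.
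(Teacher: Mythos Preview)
Your proposal is correct and follows essentially the same route as the paper's proof: identify $\overline G_+(-p)=\overline H_-(p)$ via the substitution $\widetilde w=-w$, use \eqref{updown} and \eqref{limiteq} to show $\overline n_\mu(y)=\overline m_\mu(-y)$, and then compare the two instances of \eqref{formmbareq2} to conclude that the sublevel sets $K_\mu^\pm$ coincide for $\mu>\overline H_*$. The only cosmetic difference is that for the bottom level the paper invokes Corollary~\ref{charHstr} (i.e., \eqref{minchar} applied to $G$) to obtain $\overline H_*=\min_{\Rd}\overline H_-$, whereas you use the lower bound from \eqref{hoops} directly; both handle the boundary case equally well.
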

\begin{proof}
In light of Remark~\ref{upsidedown}, \eqref{minchar}, and the fact that $\overline H_-(p) = \overline G_+(-p)$ and $\overline H_*=\overline G_*$, we have $\overline H_* = \min_{\Rd} \overline H_-$. With the notation of Remark~\ref{upsidedown}, it suffices, in view of  \eqref{formmbareq2}, to show that $\overline n_\mu(y) = \overline m_\mu(-y)$ for every $\mu > \overline H_*$ and $y\in \Rd$, a fact which is immediate from \eqref{updown} and \eqref{limiteq}.
\end{proof}

We henceforth define
\begin{equation}\label{allequal}
\overline H(p) : = \overline H_+(p) = \overline H_-(p).
\end{equation}
Observe that \eqref{transconv} implies that, for every $\mu > \overline H_*$, the $\mu$-level set of $\overline H$ has empty interior. In particular, for every $p\in \Rd$,
\begin{equation}\label{pbndry}
\overline H(p) > \overline H_*  \quad \mbox{implies that} \quad p\in \partial K_{\overline H(p)},
\end{equation}
where $\partial E$ denotes the usual Euclidean boundary of a set $E \subseteq \Rd$.

\begin{cor}\label{mmueqdC}
For each $\mu > \overline H_*$, $\overline m_\mu$ is the unique maximal solution of
\begin{equation}\label{mmueqd}
\overline H(D\overline m_\mu) = \mu \quad \mbox{in} \ \Rd \setminus \{ 0 \}, \quad \overline m_\mu(0) = 0.
\end{equation}
\end{cor}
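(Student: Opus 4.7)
The plan is to exploit the representation of $\overline{m}_\mu$ as the support function of the closed convex sublevel set $K_\mu := \{p \in \Rd : \overline{H}(p) \leq \mu\}$, which is exactly the content of~\eqref{formmbareq2}. Since $\mu > \overline{H}_*$, the set $K_\mu$ is nonempty, closed, and convex, and \eqref{pbndry} guarantees that its Euclidean boundary is contained in the level set $\{\overline{H} = \mu\}$. The boundary condition $\overline{m}_\mu(0) = \max_{p \in K_\mu} p\cdot 0 = 0$ is immediate, and $\overline{m}_\mu$ is convex, Lipschitz, and positively homogeneous by Proposition~\ref{limitfuns}.

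For the subsolution property, I would argue that $\overline{m}_\mu$ is differentiable almost everywhere (as a Lipschitz convex function), and that at any differentiability point $y_0 \neq 0$, the gradient $p := D\overline{m}_\mu(y_0)$ is the unique maximizer of $q \cdot y_0$ over $K_\mu$. Continuity of $\overline{H}$ would place $p$ in the Euclidean interior of $K_\mu$ whenever $\overline{H}(p) < \mu$, contradicting this maximality (since $y_0 \neq 0$). Thus $\overline{H}(D\overline{m}_\mu) = \mu$ a.e.\ in $\Rd \setminus \{0\}$. Applying the deterministic analogue of Lemma~\ref{convtrick} to $\overline{H}$, which is continuous, coercive, and satisfies~\eqref{sqc} by~\eqref{transconv}, upgrades this to a global viscosity inequality $\overline{H}(D\overline{m}_\mu) \leq \mu$ in $\Rd$.

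For the supersolution property in $\Rd \setminus \{0\}$, suppose $\phi$ is a smooth test function and $\overline{m}_\mu - \phi$ has a local minimum at $y_0 \neq 0$. Convexity of $\overline{m}_\mu$ forces $p := D\phi(y_0)$ to be a convex subgradient of $\overline{m}_\mu$ at $y_0$, so that the global inequality $\overline{m}_\mu(y) \geq \overline{m}_\mu(y_0) + p \cdot (y - y_0)$ holds for every $y \in \Rd$. Replacing $y$ by $ty$ and letting $t \to \infty$ gives $p \cdot y \leq \overline{m}_\mu(y)$ for all $y$, i.e.\ $p \in K_\mu$, while setting $y = 0$ gives $p \cdot y_0 \geq \overline{m}_\mu(y_0)$, whence $p$ in fact maximizes $q \cdot y_0$ over $K_\mu$. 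The same interior-versus-boundary reasoning as in the subsolution step then excludes $\overline{H}(p) < \mu$, yielding $\overline{H}(D\phi(y_0)) \geq \mu$.

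Finally, for maximality, let $w \in \Lip$ be a viscosity subsolution of $\overline{H}(Dw) \leq \mu$ in $\Rd$ with $w(0) = 0$. Coercivity of $\overline{H}$ and Lemma~\ref{convtrick} (applied to $\overline{H}$) give that $w$ is globally Lipschitz with $Dw(z) \in K_\mu$ for a.e.\ $z$. A standard mollification $w_\delta$ is then smooth, and by convexity of $K_\mu$ its gradient $Dw_\delta(z) = \int Dw(z-z')\eta_\delta(z')\,dz'$ lies in $K_\mu$ for every $z$. Consequently, for any $y \in \Rd$,
\[
w_\delta(y) - w_\delta(0) = \int_0^1 Dw_\delta(ty) \cdot y \, dt \leq \int_0^1 \max_{q \in K_\mu} q \cdot y \, dt = \overline{m}_\mu(y),
\]
and sending $\delta \to 0$ yields $w(y) \leq \overline{m}_\mu(y)$. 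Uniqueness of the maximal solution is automatic from the dominance definition. The most delicate step I expect is the supersolution identification, where one must invoke \eqref{pbndry}, that is, the no-flat-level-sets consequence of \eqref{sqc} together with the restriction $\mu > \overline{H}_*$, in order to rule out the convex subgradient $D\phi(y_0)$ lying in the Euclidean interior of $K_\mu$.
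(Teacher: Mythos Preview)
Your proof is correct and follows essentially the same route as the paper: both exploit the support-function representation~\eqref{formmbareq2}, obtain the subsolution property via Lemma~\ref{convtrick} applied to the level-set convex $\overline H$, argue the supersolution property by showing that any lower test gradient at $y_0\neq 0$ maximizes $q\cdot y_0$ over $K_\mu$ and hence lies on $\partial K_\mu$, and deduce maximality from~\eqref{formmbareq2}. Your version simply spells out in more detail the two points the paper leaves terse---namely, that a smooth function touching a convex function from below has gradient in the convex subdifferential, and the mollification/line-integral argument behind ``maximality is obvious from~\eqref{formmbareq2}.''
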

\begin{proof}
It is clear from \eqref{formmbareq2} that \eqref{mmueqd} holds  a.e. in $\Rd$. To see that it holds in the viscosity sense as well, we notice first (from Lemma~\ref{convtrick}) that $\overline m_\mu$ is a subsolution of \eqref{formmbareq2} in $\Rd$. That $\overline m_\mu$ is a viscosity supersolution away from the origin follows from the fact that, if it can be touched from below by a smooth function $\varphi$ at a point $x_0 \neq 0$, then according to \eqref{pbndry}, $D\varphi(x_0) \in \partial K_\mu$ and thus, by \eqref{formmbareq} and the continuity of $\overline H$, we obtain $\overline H(D\varphi(x_0)) = \mu$, as desired. The maximality of $\overline m_\mu$ is obvious from~\eqref{formmbareq2}.
\end{proof}
%
%
%
%

\section{The proof of homogenization from the metric problem} \label{HP}

We present a direct argument to show that \eqref{limiteq} and \eqref{mmueqd} imply the homogenization of~\eqref{HJ}. We begin with the observation that Corollary~\ref{mmueqdC} can be seen (after rescaling) as a homogenization result for the metric problem. Indeed, setting
\begin{equation*}\label{}
m_\mu^\ep(x,z,\omega):= \ep m_\mu\left( \frac x\ep , \frac z\ep, \omega \right),
\end{equation*}
we see that
\begin{equation}\label{HJMrs}
H\left( Dm_\mu^\ep, \frac x\ep, \omega\right) = \mu \quad \mbox{in} \ \Rd \setminus \{ z \}, \quad m_\mu^\ep(z,z,\omega) = 0. 
\end{equation}
In other words, $m_\mu^\ep$ corresponds to the solutions given in Proposition~\ref{exist} for the rescaled Hamiltonian $H^\ep(p,y,\omega) = H(p,y/\ep,\omega)$. Proposition~\ref{limitfuns} and Corollary~\ref{mmueqdC} yield the assertion that, as $\ep \to 0$, the problem \eqref{HJMrs} homogenizes to \eqref{mmueqd}. Of course, this assertion is \emph{a priori} weaker than Theorem~\ref{H}. However, using a variation of the perturbed test function method of Evans~\cite{E}, we show that it is actually sufficient.

Before giving the proof of Theorem~\ref{H}, we recall the following well-known reduction. To demonstrate the homogenization of~\eqref{HJ}, it suffices to consider, for $\delta > 0$, the \emph{auxiliary macroscopic problem}
\begin{equation} \label{mac}
\delta v^\delta + H(p+Dv^\delta,y,\omega) = 0 \quad \mbox{in} \ \Rd
\end{equation}
and to show that the unique bounded, uniformly continuous solution $v^\delta=v^\delta(y,\omega\,;p)$ of \eqref{mac} satisfies
\begin{equation}\label{convmac0}
\overline H(p)=-\lim_{\delta \to 0} \delta v^\delta(0,\omega\,;p) \quad \mbox{a.s. in} \ \omega.
\end{equation}
We note that a direct comparison with constant functions and \eqref{coer} yield (see~\cite{ASo} for details) a positive constant $C> 0$ such that, for all $x,y\in\Rd$,
\begin{equation} \label{delvdellip}
\big| \delta v^\delta(x,\omega\,;p)\big| \leq C \quad \mbox{and} \quad \big| v^\delta(y,\omega\,;p) - v^\delta(z,\omega\,;p) \big| \leq C|z-y|.
\end{equation}

For $p\in \Rd$, define
\begin{equation*}
h_* (p,\omega): = \liminf_{\delta \to 0} \, -\delta v^\delta(0,\omega\,;p) \quad \mbox{and} \quad h^* (p,\omega): = \limsup_{\delta \to 0} \, -\delta v^\delta(0,\omega\,;p).
\end{equation*}
It is easy to check, using \eqref{delvdellip}, that $h_*(p,\cdot)$ and $h^*(p,\cdot)$ are invariant under the translation group $( \tau_y )_{y\in \Rd}$. Hence both are constant almost surely by \eqref{ergo}, and we may write $h_*(p,\omega) = h^*(p)$ and $h_*(p,\omega) = h_*(p)$ on set of full probability.

An easy comparison argument using \eqref{reg} yields that
\begin{equation} \label{fmbsdf}
\big\{ \delta v^\delta(y,\omega\,;\cdot) \, : \, \delta > 0, \ (y,\omega)\in \Rd\times \Omega \big\} \quad \mbox{is bounded and equicontinuous on} \ \Rd.
\end{equation}
Indeed, to get a uniform modulus of continuity for $\delta v^\delta$ in the variable $p$, we simply add an appropriate constant to $v^\delta(\cdot,\omega; q)$, insert it into \eqref{mac}, and compare to $v^\delta(\cdot,\omega;p)$ (see~\cite{ASo}). This permits us to prove \eqref{convmac0} for each fixed $p\in \Rd$. We then deduce that the event in which \eqref{convmac0} occurs for every rational $p$ has full probability, and hence, by \eqref{fmbsdf}, so also does the event in which \eqref{convmac0} occurs for every $p\in \Rd$.

The following lemma follows from a well-known argument which combines the Lipschitz estimate \eqref{delvdellip}, Egoroff's theorem and the ergodic theorem. The proof is nearly identical to that of (7.3) in~\cite{ASo}, but we include it for the reader's convenience.

\begin{lem} \label{ball1d}
There exists an event $\Omega_2\in \mathcal F$ of full probability such that,  for every $\omega\in \Omega_2$, $p\in \Rd$ and $R > 0$,
\begin{equation}\label{ball1de}
h_*(p) = \liminf_{\delta \to 0} \inf_{y\in B_{R/\delta}} -\delta v^\delta(y,\omega\,;p) \quad \mbox{and} \quad h^*(p) = \limsup_{\delta \to 0} \sup_{y\in B_{R/\delta}} -\delta v^\delta(y,\omega\,;p).
\end{equation}
\end{lem}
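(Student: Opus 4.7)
The plan is to use stationarity to translate the space variable into the probability space, and then show that the set of $y \in B_{R/\delta}$ on which $-\delta v^\delta(y,\omega\,;p)$ is far from $h_*(p)$ (or $h^*(p)$) is so sparse, in the sense of Lebesgue density, that the Lipschitz bound~\eqref{delvdellip} allows us to control every point of $B_{R/\delta}$ in terms of a nearby ``good'' point.

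First, by the stationarity of $H$ and the uniqueness of the bounded solution of~\eqref{mac}, $v^\delta(y,\omega\,;p) = v^\delta(0,\tau_y\omega\,;p)$ for every $y \in \Rd$. Setting $g_\delta(\omega) := -\delta v^\delta(0,\omega\,;p)$, we have $-\delta v^\delta(y,\omega\,;p) = g_\delta(\tau_y\omega)$, and the second bound in~\eqref{delvdellip} becomes $|g_\delta(\tau_y\omega) - g_\delta(\tau_{y'}\omega)| \le C\delta|y-y'|$. Taking $y = 0$ immediately yields the easy halves $\liminf_{\delta \to 0}\inf_{y \in B_{R/\delta}} g_\delta(\tau_y\omega) \le h_*(p)$ and $\limsup_{\delta \to 0}\sup_{y \in B_{R/\delta}} g_\delta(\tau_y\omega) \ge h^*(p)$ on a probability-one event.

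For the matching inequality (I treat only $h_*$; the case of $h^*$ is symmetric), fix $\eta > 0$ and define
\begin{equation*}
A^\eta_{\delta_0} := \{\omega : g_{\delta'}(\omega) \ge h_*(p) - \eta \text{ for all } 0 < \delta' \le \delta_0\}.
\end{equation*}
These sets increase as $\delta_0 \searrow 0$, and their union has full probability by definition of $h_*(p)$. Given any target $\alpha > 0$, I pick $\delta_0 = \delta_0(\eta,\alpha)$ with $\Prob[A^\eta_{\delta_0}] \ge 1 - \alpha$. The multiparameter ergodic theorem applied to the indicator of $A^\eta_{\delta_0}$ shows that, on an event of full probability, the ``bad set'' $\{y \in B_r : \tau_y\omega \notin A^\eta_{\delta_0}\}$ has density at most $2\alpha$ in $B_r$ for all sufficiently large $r$.

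To conclude, I take $r = (R + 1)/\delta$. For $\eta$ small and any $y \in B_{R/\delta}$, the ball $B(y, \eta/(C\delta))$ is contained in $B_{(R+1)/\delta}$. Choosing $\alpha = \alpha(\eta, R) := \eta^d / (4C^d(R+1)^d)$ makes the bad set's measure in $B_{(R+1)/\delta}$ strictly less than the Lebesgue measure of $B(y, \eta/(C\delta))$, forcing some $y' \in B(y, \eta/(C\delta))$ with $\tau_{y'}\omega \in A^\eta_{\delta_0}$. Provided $\delta < \delta_0$, the defining property of $A^\eta_{\delta_0}$ combined with Lipschitz continuity gives $g_\delta(\tau_y\omega) \ge g_\delta(\tau_{y'}\omega) - C\delta \cdot \eta/(C\delta) \ge h_*(p) - 2\eta$ for all sufficiently small $\delta$, with threshold depending on $\omega$, $\eta$, $R$. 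Sending $\eta \to 0$ gives the desired inequality. The exceptional null set depends on $p$ and $R$, but the equicontinuity in~\eqref{fmbsdf} lets me restrict to $p \in \Q^d$ and monotonicity of $R \mapsto \inf_{B_{R/\delta}}$ to $R \in \N$; a countable intersection then yields $\Omega_2$. The main obstacle is the scale-matching in the volume argument: one must take $\alpha \ll \eta^d$ so that the global density bound from the ergodic theorem produces a good point within Lipschitz-reach $O(\eta/\delta)$ of \emph{every} $y \in B_{R/\delta}$, not merely of most of them.
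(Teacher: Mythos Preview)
Your argument is correct and follows essentially the same route as the paper: both combine an Egoroff-type set (your $A^\eta_{\delta_0}$ is exactly what Egoroff produces), the multiparameter ergodic theorem applied to its indicator, and the Lipschitz bound~\eqref{delvdellip} to transfer control from a dense set of good points to every point of $B_{R/\delta}$. Your version is slightly more explicit in the volume-matching step (decoupling the tolerance $\eta$ from the probability deficit $\alpha$ and choosing $\alpha\sim\eta^d$), whereas the paper runs both with a single parameter; otherwise the proofs are the same.
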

\begin{proof}
Since the arguments are nearly the same, we only prove the first identity of \eqref{ball1de}. For ease of notation, we drop the dependence on $p$. Fix $0 < \alpha <\frac12$. From the definition of $h_*(p)$ and Egoroff's theorem, there exists $\delta_\alpha> 0$ and an event $E_\alpha \subseteq \Omega$ with probability $\Prob[E_\alpha] \geq 1-\alpha$ such that, for every $0 < \delta \leq  \delta_\alpha$,
\begin{equation*}
\inf_{\omega\in E_\alpha} -\delta v^\delta(0,\omega) \geq h_* -\alpha.
\end{equation*}
According to the ergodic theorem (c.f. the multiparameter version proved in Becker~\cite{B}), there exists a subset $F_\alpha \subseteq \Omega$ of full probability such that, for every $\omega \in F_\alpha$,
\begin{equation*}
\lim_{R \to \infty} \fint_{B_R} \mathds{1}_{E_\alpha}(\tau_y\omega) \, dy = \Prob[E_\alpha] \geq 1-\alpha.
\end{equation*}
Define $F_0 : = \cap_{j=1}^\infty F_{2^{-j}}$ so that $\Prob[F_0] =1$. Fix $\omega \in F_0$ such that $h_*(\omega) = h_*$ and $R, \rho > 0$, with $\rho = 2^{-j}$ for some $j\in \N$. It follows that, if $\delta > 0$ is sufficiently small (depending on $\omega$, $R$ and $\rho$), then
\begin{equation} \label{fillingup}
| \{ y\in B_{R/\delta} : \tau_y \omega \in E_\rho \} | \geq (1-2\rho) |B_{R/\delta}|.
\end{equation}
Select any $z\in B_{R/\delta}$. According to~\eqref{fillingup}, there exists a point $y\in B_{R/\delta}$ with $|y-z|\leq C\rho R\delta^{-1}$ and $\tau_y\omega \in E_\rho$. In view of the Lipschitz estimate \eqref{delvdellip} and the stationarity of the $v^\delta$'s, we deduce that, for each $\delta$ sufficiently small, depending on $\omega$, $\rho$, and $R$,
\begin{align*}
-\delta v^\delta(z,\omega) - h_* & \geq -| \delta v^\delta(z,\omega) - \delta v^\delta(y,\omega)| - \delta v^\delta(y,\omega) - h_*
\\ & \geq -C\rho R - \delta v^\delta(0,\tau_y\omega) - h_* \\ & \geq -C\rho R - \rho.
\end{align*}
Since $\rho$ can be made arbitrarily small, we deduce that, for each $\omega \in F_0\cap \{ \omega\,:\, h_*(p,\omega)=h_*(p)\}$ and $R>0$,
\begin{equation*}
\liminf_{\delta\to 0} \inf_{z\in B_{R/\delta}} -\delta v^\delta(z,\omega)  \geq h_*,
\end{equation*}
which is the desired conclusion.
\end{proof}

We now give the proof of the homogenization of \eqref{HJ}.

\begin{proof}[{Proof of Theorem~\ref{H}}]
We argue that $ h_*(p)=h^*(p)=\overline H(p)$. With $\Omega_1$ and $\Omega_2$ as in Proposition~\ref{limitfuns} and Lemma~\ref{ball1d}, respectively, fix $\omega\in \Omega_1\cap \Omega_2:=\Omega_0$ and $p\in \Rd$, define
\begin{equation}\label{wdel}
w^\delta(y): = v^\delta(y,\omega\,;p) - v^\delta(0,\omega\,;p) + p\cdot y,
\end{equation}
and observe that $w^\delta$ satisfies the equation
\begin{equation}\label{wdeleq}
H(Dw^\delta,y,\omega) = -\delta v^\delta(y,\omega\,;p) \quad \mbox{in} \ \Rd.
\end{equation}
Owing to \eqref{delvdellip}, we can find a subsequence $\delta_j\to 0$, which depends on $\omega$, and a function $w \in \Lip$ such that, as $j\to \infty$, 
$w^{\delta_j} \to w$ locally uniformly in $\Rd$ and
\begin{equation} \label{spcdel}
h_*(p)  = - \lim_{j \to \infty}  \delta_j v^{\delta_j}(0,\omega\,;p). \end{equation}
By passing to limits and using \eqref{delvdellip}, we deduce that $w$ is a solution of
\begin{equation*}\label{}
H(Dw,y,\omega) = h_*(p) \quad \mbox{in} \ \Rd, 
\end{equation*}
and, hence, $\overline H_* \leq h_*(p)$.

We next show by a comparison argument similar to one presented in the proof of~\cite[Proposition~7.1]{ASo} that $\overline H(p) \leq h_*(p)$. Suppose on the contrary that $\alpha: = \overline H(p)- h_*(p) > 0$. For convenience, denote $\mu:= \overline H(p)$ and observe that $\mu - \overline H_*\geq \alpha > 0$. It follows from \eqref{pbndry} that $p\in \partial K_\mu$, and thus, by \eqref{formmbareq}, there exists $x\in \Rd$ with $|x| =1$ such that $p \in \partial \overline m_\mu(x)$. In other words,
\begin{equation}\label{keyx}
0 = \overline m_\mu(x) - p\cdot x \leq 0 \leq \overline m_\mu(y) - p\cdot y \quad \mbox{for every} \ y\in \Rd.
\end{equation}
Our argument relies on the fact that, with this choice of $x$, the function $y\mapsto m_\mu(y,-tx,\omega) - p\cdot y$, for large enough $t> 0$, acts as an ``approximate supercorrector" in the ball of radius $t/2$ centered at the origin, by which we mean that it is nearly a supersolution of the macroscopic problem and is appropriately bounded from below.

Fix $0<r\leq1$ small enough that $Cr <  \alpha/8$, where $C> 0$ is the Lipschitz constant of the $v^\delta$'s in~\eqref{delvdellip}, and find a subsequence $\delta_j\to 0$ such that~\eqref{spcdel} holds. Recall that the subsequence depends on $\omega$ but the latter is chosen from the event $\Omega_1\cap \Omega_2$, which is of full probability. We have, for large enough~$j$, 
\begin{equation*}\label{}
H(Dw^{\delta_j},y,\omega) \leq h_*(p) + \frac14\alpha \quad \mbox{in} \ B_{r/\delta_j}.
\end{equation*}
Define the functions 
\begin{equation} \label{hats}
\widehat m_\mu(y,z) : = m_\mu(y,z,\omega) - m_\mu(0,z,\omega) \quad \mbox{and} \quad \widehat m_\mu^\delta(y) : = \widehat m_\mu(y,-x/\delta) + \ep (r^2+3|y|^2)^{\frac12} - \ep r.
\end{equation}
By taking $\ep > 0$ small enough, depending on $\alpha$, we have
\begin{equation*}\label{}
H( D\widehat m_\mu^\delta,y,\omega) \geq \mu - \frac14 \alpha = h_*(p) + \frac34 \alpha \quad \mbox{in} \ \Rd \setminus \{ -x/\delta\} \supseteq B_{r/\delta}.
\end{equation*}
Since the comparison principle yields, for sufficiently large $j$ and small $\ep> 0$,
\begin{equation}\label{contr}
0 = w^{\delta_j}(0) - \widehat m_\mu^{\delta_j}(0) \leq \max_{y\in \partial B_{r/\delta_j}}  \left( w^{\delta_j}(y) - \widehat m_\mu^{\delta_j}(y) \right),
\end{equation}
we obtain a contradiction by showing that, for large enough $j$, the last term on the right is actually negative. Notice that, by \eqref{ball1de} and $\omega\in \Omega_2$, we have
\begin{equation}\label{delvdelbcs}
\limsup_{\delta \to 0} \sup_{y\in \partial B_{1/\delta}} \delta v^\delta(y,\omega\,;p) + h_*(p) = 0,
\end{equation}
and by $\omega\in \Omega_1$, \eqref{limiteq} and \eqref{keyx},
\begin{equation*}\label{mmudel}
\begin{aligned}
\lim_{\delta \to 0} \inf_{y\in \partial B(0,r/\delta)} \delta \big( \widehat m_\mu^\delta(y) -p\cdot y\big) & = \ep r + \lim_{\delta\to0} \inf_{z\in \partial B(0,r)} \delta \Big( m_\mu\Big(\frac z\delta , -\frac x\delta,\omega\Big) - m_\mu\Big(0,-\frac x\delta,\omega\Big) -  p\cdot\frac z \delta \Big) \\
& = \ep r + \inf_{z\in \partial B(0,r)} \big( \overline m_\mu(z+x) - \overline m_\mu(x) - p\cdot z \big) \\
& \geq \ep r.
\end{aligned}
\end{equation*}
Subtracting this inequality from \eqref{delvdelbcs}, using \eqref{spcdel}, we deduce that the right side of \eqref{contr} is indeed negative for all sufficiently large $j$. This completes the proof that $\overline H(p)\leq h_*(p)$. 

We have left to show that $h^*(p) \leq \overline H(p)$. We use an argument similar to the one above, which, however, is simpler in this case, since we need an ``approximate subcorrector" instead of an approximate supercorrector, and subsolutions are easier to obtain in this context than supersolutions. The subcorrector is constructed by essentially substituting  $y\mapsto -n_\mu(y,x,\omega) =- m_\mu(x,y,\omega)$ for $y\mapsto m_\mu(y,x,\omega)$ in the argument above, and changing some signs. For completeness, we give the details.

Arguing by contradiction, we suppose that $h^*(p) > \overline H(p)$. Select $\overline H(p) < \mu < h^*(p)$ and set $\alpha:= h^*(p) - \mu> 0$. Observe that $p\in\partial \overline m_\mu(0)$, which is equivalent to $-p \in \partial \overline n_\mu(0)$, which is equivalent to
\begin{equation}\label{keyx-n}
0\leq \overline n_\mu(y) + p\cdot y \quad \mbox{for every} \ y\in \Rd.
\end{equation}
Continuing to mimic the previous setup, let $0<r\leq1$ be as above and select a sequence $\delta_j  \to 0$, once again depending on $\omega$, such that
\begin{equation} \label{epdel-n}
\lim_{j\to \infty} -\delta_j v^{\delta_j}(\cdot,\omega\,;p) = h^*(p),
\end{equation}
and deduce that, for all large enough $j$,
\begin{equation*}
H( Dw^{\delta_j},y,\omega) \geq h^*(p) - \frac14\alpha \quad \mbox{in} \ B_{r/\delta_j}.
\end{equation*}
Denote
\begin{equation*}
\widehat n_\mu (y) :=  - n_\mu(y,0,\omega) - \ep |y|
\end{equation*}
and notice that, compared to \eqref{hats}, we have introduced a sign change, set $x=0$ and simplified the perturbative term, which does not need to be smooth since we may use Lemma~\ref{convtrick}. Owing to \eqref{nmuglss} and \eqref{reg}, we have, for $\ep > 0$ small enough, depending on $\alpha$,
\begin{equation*}
H\big(D\widehat n_\mu,y,\omega\big) \leq \mu + \frac14\alpha = h^*(p) - \frac34 \alpha \quad \mbox{in} \ \Rd.
\end{equation*}
The comparison principle yields, for large enough $j$,
\begin{equation} \label{contr-n}
0 = w^{\delta_j}(0) - \widehat n_\mu(0) \geq \min_{y\in \partial B(0,r/\delta_j)} \big( w^{\delta_j}(y) - \widehat n_\mu(y) \big).
\end{equation}
To contradict \eqref{contr-n}, we again use $\omega\in \Omega_2$ and \eqref{ball1de} to get
\begin{equation} \label{delvdelbcs-n}
\liminf_{\delta \to 0} \inf_{y\in B_{1/\delta}} \delta v^\delta(y,\omega\,;p) + h^*(p)  =0,
\end{equation}
and then apply $\omega\in \Omega_1$, \eqref{limiteq} and \eqref{keyx-n} to get 
\begin{equation}\label{mmudel-n}
\begin{aligned}
\lim_{\delta \to 0} \sup_{y\in \partial B(0,r/\delta)} \delta \big( \widehat n_\mu(y) -p\cdot y\big) & = -\ep r + \lim_{\delta\to0} \sup_{z\in \partial B(0,r)} \delta \Big( - n_\mu\Big(\frac z\delta,0,\omega \Big) -  p\cdot\frac z \delta \Big) \\
& = -\ep r + \sup_{z\in \partial B(0,r)} \big( -\overline n_\mu(z) - p\cdot z \big) \\ &\leq -\ep r.
\end{aligned}
\end{equation}
Subtracting \eqref{mmudel-n} from \eqref{delvdelbcs-n} and using \eqref{epdel-n}, we conclude that \eqref{contr-n} is violated for sufficiently large $j$. The proof is complete.
\end{proof}

\section{Qualitative properties of the effective Hamiltonian} \label{C}

We make several observations regarding the qualitative behavior of $\overline H$. Previous proofs relied on the existence of correctors (in the periodic setting) or subcorrectors which are strictly sublinear at infinity. Here we deduce it instead from the symmetry found in Remark~\ref{upsidedown}, specifically from \emph{one-sided} inf-sup formulas implied by \eqref{Hbarpm2} and \eqref{trip}, which as far as we know are know, i.e., a.s. in $\omega$,
\begin{equation}\label{1sinfsup}
\overline H(p) = \inf_{w\in\SLp} \esssup_{y\in\Rd} H(p+Dw,y,\omega) = \inf_{w\in\SLm} \esssup_{y\in\Rd} H(p+Dw,y,\omega).
\end{equation}

\begin{cor} \label{evenH}
If $p\mapsto H(p,y,\omega)$ is even, then $p\mapsto \overline H(p)$ is even. 
\end{cor}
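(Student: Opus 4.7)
The plan is to derive evenness of $\overline H$ directly from the one-sided inf-sup identities~\eqref{1sinfsup}, using the map $w \mapsto -w$ to swap the two admissible classes $\SLp$ and $\SLm$. This is why the symmetry observed in Remark~\ref{upsidedown} (and the consequent identity $\overline H_+ = \overline H_-$) is the decisive input: it makes the two formulas in~\eqref{1sinfsup} genuinely equivalent, and evenness amounts to checking that the change of variable respects this symmetry.

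More concretely, I would start from the right-hand formula of~\eqref{1sinfsup} applied at $-p$:
\begin{equation*}
\overline H(-p) = \inf_{w\in\SLm} \esssup_{y\in\Rd} H(-p+Dw,y,\omega).
\end{equation*}
Using evenness of $H$ in its first argument, $H(-p+Dw,y,\omega) = H(p-Dw,y,\omega)$. The key observation is that the substitution $\widetilde w := -w$ is a bijection $\SLm \to \SLp$: indeed $w\in\SLm$ means $\limsup_{|y|\to\infty} |y|^{-1}w(y)\le 0$, which is the same as $\liminf_{|y|\to\infty} |y|^{-1}\widetilde w(y)\ge 0$, i.e., $\widetilde w\in\SLp$. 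Moreover $D\widetilde w = -Dw$, so $H(p-Dw,y,\omega) = H(p+D\widetilde w,y,\omega)$. Reindexing the infimum gives
\begin{equation*}
\overline H(-p) = \inf_{\widetilde w\in\SLp}\esssup_{y\in\Rd} H(p+D\widetilde w,y,\omega) = \overline H(p),
\end{equation*}
where the final equality is the left-hand identity in~\eqref{1sinfsup}. This holds almost surely in $\omega$, and since $\overline H$ is deterministic the conclusion follows for all $p\in\Rd$.

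I do not anticipate a real obstacle: the only step requiring any care is the bijection $\SLm \leftrightarrow \SLp$ via $w\mapsto -w$, which is immediate from the definitions, and the observation that evenness of $H$ in $p$ is used pointwise in $(y,\omega)$ before taking the essential supremum. Were only the two-sided formula (via $w\in\SL$) available, this argument would merely give $\widehat H(-p)=\widehat H(p)$, which is strictly weaker; it is precisely the one-sided identity~\eqref{1sinfsup}, which equates the $\SLp$- and $\SLm$-infima, that powers the whole conclusion.
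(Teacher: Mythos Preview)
Your proof is correct and follows essentially the same route as the paper's: both arguments use the one-sided inf-sup identities~\eqref{1sinfsup}, apply evenness of $H$ pointwise, and exploit the bijection $w\mapsto -w$ between $\SLm$ and $\SLp$ to pass from one formula to the other. The only cosmetic difference is that the paper begins at $\overline H(p)$ with the $\SLp$-formula and ends at $\overline H(-p)$ via the $\SLm$-formula, whereas you run the chain in the opposite direction.
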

\begin{proof}
According to~\eqref{1sinfsup}, a.s. in $\omega$,
\begin{align*}
\overline H(p) = \inf_{w\in\SLp} \esssup_{y\in\Rd} H(p+Dw(y),y,\omega) & = \inf_{w\in\SLp} \esssup_{y\in\Rd} H(-p-Dw(y),y,\omega) \\ & = \inf_{w\in\SLm} \esssup_{y\in\Rd} H(-p+Dw(y),y,\omega) = \overline H(-p).
 \qedhere
 \end{align*}
\end{proof}

It is well-known that the effective Hamiltonian may fail to be strictly convex even if $H$ is strictly convex in $p$. For example, in very general situations, $\overline H$ possesses a ``flat spot" (i.e., is constant on a set of nonempty interior) at its minimum, and its level sets may have flat edges (in the periodic setting, see Concordel~\cite{C} and E~\cite{WE}). However, the failure of strict convexity is restricted to the shape of the level sets. Indeed, recall from \eqref{transconv} that
\begin{equation}\label{Hbstrc}
\overline H\big( \tfrac12 (p+q) \big) \leq \Lambda\big(\overline H(p),\overline H(q) \big).
\end{equation}
It follows that $\overline H$ is strictly convex in directions transverse to its level sets, provided that $H$ is so, since $H$ is strictly convex in directions transverse to its level sets if and only if there exists $\Lambda$ as in \eqref{Lambda} such that \eqref{sqc} holds, and, for all $\mu,\nu \in \R$ with $\mu\neq \nu$, $\Lambda(\mu,\nu) < \tfrac12(\mu+\nu)$. This observation  simplifies as well as generalizes and precisely quantifies a result of Evans and Gomes~\cite{EG}, who proved a weaker version of it, using the methods of weak KAM theory, at points of differentiability of $\overline H$, in the periodic setting and under the assumption that $H$ is uniformly convex in $p$.

We conclude with a proof of existence, for any $p\in \Rd$, of exact subcorrectors, using an argument due to Lions and Souganidis~\cite{LS3}. In the level-set convex case, our arguments are \emph{a posteriori} in the sense that they rely on the homogenization result itself, namely \eqref{convmac0}. As a consequence we recover the usual form of the inf-sup formula for $\overline H(p)$, that is, we obtain that $\widehat H(p) = \overline H(p)$.

\begin{prop} \label{usual}
For every $p\in \Rd$, 
\begin{equation}\label{usualeq}
\overline H(p) = \inf_{w\in \SL} \esssup_{y\in \Rd} H(p+Dw(y),y,\omega) \quad \mbox{a.s. in} \ \omega.
\end{equation}
Moreover, there exists $w=w(y,\omega)$ with stationary gradient satisfying 
\begin{equation}\label{subcorrect}
H(p+Dw,y,\omega) \leq \overline H(p) \quad \mbox{in} \ \Rd \quad \mbox{and} \quad w(\cdot,\omega) \in \SL \quad \mbox{a.s. in} \ \omega.
\end{equation}
\end{prop}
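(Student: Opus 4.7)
The plan, following Lions and Souganidis~\cite{LS3}, is to construct the subcorrector by taking a weak-$\ast$ limit of the centered auxiliary solutions
\begin{equation*}
\tilde w^\delta(y,\omega) := v^\delta(y,\omega;p) - v^\delta(0,\omega;p)
\end{equation*}
and then applying Kozlov's lemma. The stationarity relation $v^\delta(y,\tau_z\omega;p)=v^\delta(y+z,\omega;p)$, which follows from uniqueness for \eqref{mac} and \eqref{stationary}, gives that $\tilde w^\delta$ has stationary, mean-zero increments, while \eqref{delvdellip} provides uniform Lipschitz bounds. Moreover $\tilde w^\delta$ satisfies $H(p + D\tilde w^\delta,y,\omega) = -\delta v^\delta(y,\omega;p)$ in $\Rd$, and Lemma~\ref{ball1d} ensures that the right side converges to the constant $\overline H(p)$ uniformly on $B_{R/\delta}$ for every $R>0$, a.s.\ in~$\omega$.

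I would first extract a deterministic subsequence $\delta_j\to 0$ along which the gradients $D\tilde w^{\delta_j}$ converge, weakly in $L^p_{\mathrm{loc}}(\Rd\times\Omega;\Rd)$ for every $p<\infty$, to a stationary vector field $F$. Its mean vanishes: testing against $\mathbf{1}_{B_R}$, the divergence theorem combined with $\E[\tilde w^{\delta_j}(y,\cdot)]=0$ yields $\E\bigl[\fint_{B_R} D\tilde w^{\delta_j}(y,\cdot)\,dy\bigr]=0$, and passing to the limit then using stationarity of $F$ gives $\E[F]=0$. Invoking Mazur's theorem, there exist finite convex combinations
\begin{equation*}
\tilde v_n := \sum_{k\geq n}\lambda_{n,k}\,\tilde w^{\delta_k},\qquad \lambda_{n,k}\geq 0,\quad \sum_k\lambda_{n,k}=1,
\end{equation*}
whose gradients converge strongly to $F$ in $L^p_{\mathrm{loc}}$. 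By Arzela--Ascoli, along a further subsequence $\tilde v_n\to w$ locally uniformly on $\Rd$, a.s.\ in~$\omega$, with $Dw=F$ a.e. The function $w$ inherits stationary mean-zero increments from the $\tilde v_n$, so Lemma~\ref{kozlov} gives $w(\cdot,\omega)\in\SL$ a.s.

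The heart of the argument is passing to the limit in the equation. By level-set convexity \eqref{qc} applied iteratively,
\begin{equation*}
H(p + D\tilde v_n,y,\omega) \leq \max_{k\geq n,\ \lambda_{n,k}>0}\bigl(-\delta_k v^{\delta_k}(y,\omega;p)\bigr) \quad \mbox{in} \ \Rd,
\end{equation*}
and Lemma~\ref{ball1d} bounds the right side by $\overline H(p)+\ep$ on $B_R$ for all $n$ sufficiently large, a.s.\ in $\omega$. Viscosity stability (cf.\ Lemma~\ref{convtrick}) then gives $H(p+Dw,y,\omega)\leq \overline H(p)+\ep$ in $B_R$, and letting $R\to\infty$ and $\ep\to 0$ finishes the proof of \eqref{subcorrect}. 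Formula \eqref{usualeq} follows at once: the inclusion $\SL\subseteq\SL^\pm$ combined with \eqref{allequal} gives the inequality $\widehat H(p)\geq \overline H(p)$, while \eqref{subcorrect} provides the reverse inequality upon choosing, for each $\omega$ in the full-measure event, the deterministic test function $y\mapsto w(y,\omega)\in\SL$.

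The main obstacle is arranging the limit in a way compatible with stationarity. The naive approach of extracting a locally uniformly convergent subsequence of $\tilde w^{\delta_j}(\cdot,\omega)$ via Arzela--Ascoli would produce a subsequence depending on~$\omega$, destroying the stationarity of $Dw$; the Mazur step is precisely what allows a single deterministic subsequence to do the job. A second subtlety is that, since only level-set convexity (not full convexity) is available, a convex combination of approximate subsolutions only satisfies the equation with the right-hand side equal to the \emph{maximum} of the individual right-hand sides, which is why the uniform-in-ball convergence provided by Lemma~\ref{ball1d} is indispensable.
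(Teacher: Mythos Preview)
Your argument is correct and is precisely the alternative proof that the paper sketches in the paragraph \emph{following} its own proof of Proposition~\ref{usual}: weak compactness of the gradients, Mazur's lemma to upgrade to strong convergence of convex combinations, then level-set convexity~\eqref{qc} together with the uniform control from Lemma~\ref{ball1d} to pass the limit inside $H$. The paper's \emph{primary} argument, however, is quite different. Instead of Mazur, the paper takes a weak-$*$ limit $w$ of the centered $w^{\delta_j}$ and never touches the nonlinearity directly. It invokes the domination characterization of subsolutions (Remark~\ref{dom}): to obtain $H(p+Dw,\cdot,\omega)\leq\overline H(p)$ it suffices to verify the family of \emph{linear} inequalities $w(y,\omega)-w(x,\omega)+p\cdot(y-x)\leq m_{\overline H(p)}(y,x,\omega)$ for all $x,y$. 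These are obtained by first showing (via the maximality of $m_\mu$ and Lemma~\ref{ball1d}) that $w^{\delta_j}+p\cdot(\cdot-x)$ is eventually dominated by $m_\mu(\cdot,x,\omega)$ for each $\mu>\overline H(p)$, then passing to the weak-$*$ limit against test functions, and finally sending $\mu\searrow\overline H(p)$.

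The trade-off: the paper's route exploits the metric machinery of Section~\ref{EIK} and reduces the nonlinear problem to checking affine inequalities, which are stable under weak limits without any Mazur step; your route is more self-contained (it does not use $m_\mu$ or Remark~\ref{dom}) but needs the extra real-analysis input of Mazur and a slightly more delicate passage from strong $L^p_{\mathrm{loc}}$ convergence of gradients to a.s.\ locally uniform convergence of the $\tilde v_n$ themselves. On that last point, your invocation of Arzel\`a--Ascoli is a little loose: a direct use of Arzel\`a--Ascoli would give an $\omega$-dependent subsequence. What actually does the work is that strong convergence $D\tilde v_n\to F$ in $L^2_{\mathrm{loc}}(\Rd\times\Omega)$ combined with $\tilde v_n(0,\omega)=0$ forces $\tilde v_n\to w$ strongly in $L^2_{\mathrm{loc}}(\Rd\times\Omega)$; one then passes to a subsequence to get a.e.-$\omega$ convergence in $L^2_{\mathrm{loc}}(\Rd)$, which the uniform Lipschitz bound upgrades to locally uniform convergence.
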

\begin{proof}
One inequality in \eqref{usualeq} is given by \eqref{Hbarpm2-o} and \eqref{allequal}, and the other follows from the subcorrector $w$ we construct below.

Fix $p\in \Rd$ and define $w^\delta(y,\omega): = v^\delta(y,\omega\,;p) - v^\delta(0,\omega\,;p)$. According to the first inequality of~\eqref{delvdellip}, we may select a subsequence $\delta_j \to 0$ and a function $w\in L^\infty(\Rd \times \Omega)$ such that, as $j\to \infty$,
\begin{equation}\label{weakstar}
w^{\delta_j} \rightharpoonup w \quad\mbox{weakly-star in} \ L^\infty(\Rd\times \Omega).
\end{equation}
Since, by \eqref{delvdellip}, $w^{\delta_j}$ is uniformly Lipschitz continuous a.s. in $\omega$, it follows that $w$ is Lipschitz on $\Rd$ a.s. in $\omega$ with the same constant. We claim that $w$ satisfies \eqref{subcorrect}.

To demonstrate the differential inequality in \eqref{subcorrect}, it suffices by Remark~\ref{dom} to prove that, for every $x,y\in\Rd$ and a.s. in $\omega$,
\begin{equation}\label{clm}
w(y,\omega) - w(x,\omega) + p\cdot (y-x) \leq m_{\overline H(p)}(y,x,\omega).
\end{equation}
Fix $\mu > \overline H(p)$, $x\in \Rd$ and $\omega \in \Omega_1\cap \Omega_2$ so that the limits in Proposition~\ref{limitfuns} and Lemma~\ref{ball1d} hold. According to Remark~\ref{strcn}, \eqref{charHp}, \eqref{limiteq} and~\eqref{formmbareq2},
\begin{equation}\label{dfgh}
\liminf_{|y| \to \infty} |y|^{-1} \left( m_{\mu}(y,x,\omega) -p\cdot y\right) \geq c + \liminf_{|y| \to \infty} |y|^{-1} \left( m_{\overline H(p)}(y,x,\omega) -p\cdot y\right) \geq c.
\end{equation}
Using \eqref{delvdellip} and \eqref{dfgh}, there exists $R> 0$, sufficiently large and depending also on $x$, such that for  every large $j\in \N$ and $y\in \Rd \setminus B_{R/\delta_j}$,
\begin{equation}\label{strcneq}
 w^{\delta_j}(y,\omega) -w^{\delta_j}(x,\omega)+ p\cdot(y-x) \leq \frac{C}{\delta_j} + p\cdot (y-x) \leq \frac12 c|y-x| + p\cdot (y-x) \leq m_\mu(y,x,\omega).
\end{equation}
By \eqref{convmac0} and Lemma~\ref{ball1d}, for all sufficiently large $j\in \N$, 
\begin{equation*}\label{}
H(p+Dw^{\delta_j},y,\omega) = \delta_j w^{\delta_j}(y,\omega) \leq \mu \quad \mbox{in} \ B_{R/\delta_j}.
\end{equation*}
If follows that $\psi(y):= \max\left\{ w^{\delta_j}(y,\omega) -w^{\delta_j}(x,\omega)+ p\cdot(y-x), \, m_\mu(y,x,\omega) \right\}$ satisfies  $H(D\psi,y) \leq \mu$ in $\Rd$. By the maximality of $m_\mu(\cdot,x,\omega)$, we obtain that $\psi(y) \leq m_\mu(y,x,\omega)$ for all large $j\in\N$ and every $y\in \Rd$. Hence, for every $x,y\in \Rd$, $\omega\in \Omega_2$ and sufficiently large $j \in \N$,
 \begin{equation*}
 w^{\delta_j}(y,\omega) -w^{\delta_j}(x,\omega)+ p\cdot(y-x) \leq m_\mu(y,x,\omega).
 \end{equation*}
In other words, for every $x,y\in \Rd$ and a.s. in $\omega$,
\begin{equation}\label{aslimsup}
\limsup_{j \to \infty} \left( w^{\delta_j}(y,\omega) -w^{\delta_j}(x,\omega) \right) \leq m_\mu(y,x,\omega) - p\cdot (y-x).
\end{equation}
From this it is easy to obtain \eqref{clm} by passing to limits. Indeed, fixing $z\in \Rd$ and a nonnegative test function $\varphi\in L^1(\Rd \times\Omega)$, we find that, for every $z\in \Rd$,
\begin{equation*}
\begin{aligned}
\lefteqn{\int_{\Rd \times \Omega} \left( w(y,\omega) - w(y-z,\omega) \right) \varphi(y,\omega) \, dy\, d\Prob(\omega)} \qquad & \\ & =  \lim_{j\to \infty} \int_{\Rd \times \Omega} \left( w^{\delta_j}(y,\omega) - w^{\delta_j}(y-z,\omega) \right) \varphi(y,\omega) \, dy\, d\Prob(\omega)\\
& \leq \int_{\Rd\times\Omega} \left( m_\mu(y,y-z,\omega) - p\cdot z \right) \varphi(y,\omega) \, dy \, d\Prob(\omega), 
\end{aligned}
\end{equation*}
where the first passage to the limit is via \eqref{weakstar} and the inequality is justified by the dominated convergence theorem and \eqref{aslimsup}, since the uniform Lipschitz continuity of $w^{\delta_j}$ implies that the integrand is dominated by $C|z|\varphi \in L^1(\Rd\times\Omega)$. We deduce that, for every $z\in \Rd$ and a.e. $(y,\omega) \in \Rd\times \Omega$,
\begin{equation*}\label{}
w(y,\omega) - w(y-z,\omega) + p\cdot z \leq m_{\mu}(y,y-z,\omega).
\end{equation*}
Taking $z=y-x$ and using the continuity of $w$, we obtain that, for every $x,y\in\Rd$ and a.s. in $\omega$,
\begin{equation*}
w(y,\omega) - w(x,\omega) + p\cdot (y-x) \leq m_{\mu}(y,x,\omega).
\end{equation*}
Sending $\mu \ssearrow \overline H(p)$ in view of Remark~\ref{mucontin} yields \eqref{clm}. 

That $w$ has stationary and mean-zero increments is immediate from the analogous property of the $w^\delta$'s and~\eqref{weakstar}, and so we omit the details. Finally, an application of Lemma~\ref{kozlov} yields that $w(\cdot,\omega) \in \SL$ a.s. in $\omega$. 
\end{proof}

When $H$ is convex, the proof of the existence of a subcorrector which is strictly sublinear at infinity is considerably simpler because it is possible to interchange the weak limits and the nonlinear convex $H$ without knowing in advance that there is homogenization. This argument fails, however, for quasiconvex Hamiltonians. It is possible to give a different proof for~Proposition~\ref{usual}, following~\cite{LS3}, provided we use the fact that the equation homogenizes. In the final step we apply Mazur's lemma, which allows us to take linear convex combinations of the weakly convergent sequence to obtain a strongly convergence sequence with the same limit. The level-set convexity of $H$ and the fact that $-\delta_j v^{\delta_j}(y,\omega\,;p)$ converges to $\overline H(p)$ a.s. in~$\omega$ and uniformly on balls of radius$\sim1/\delta$ allows us to pass the limit of the linear convex combinations of the gradient inside $H$.

\subsection*{Acknowledgements}
The first author was partially supported by NSF Grant DMS-1004645 and the second author by NSF Grant DMS-0901802.

\bibliographystyle{plain}
\bibliography{quasiconvex}

\end{document}